\documentclass[12pt,a4paper]{article}
\usepackage{mathrsfs}
\usepackage{epsfig, graphicx}
\usepackage{latexsym,amsfonts,amsbsy,amssymb}
\usepackage{amsmath,amsthm}
\usepackage{color}
\usepackage[colorlinks, citecolor=blue]{hyperref}

\usepackage{color}

\textwidth=15cm \textheight=22cm \topmargin 0 cm
\oddsidemargin 0in

\evensidemargin 0in \baselineskip= 12pt
\parindent=12pt
\parskip=3pt
\overfullrule=0pt
\bibliographystyle{plain}
\makeatletter 

\@addtoreset{equation}{section}
\makeatother 
\allowdisplaybreaks 

\newtheorem{theorem}{Theorem}[section]
\newtheorem{lemma}{Lemma}[section]
\newtheorem{corollary}{Corollary}[section]
\newtheorem{remark}{Remark}[section]
\newtheorem{algorithm}{Algorithm}[section]

\newtheorem{proposition}{Proposition}[section]

\begin{document}

\title{A Multilevel Newton Iteration Method for Eigenvalue Problems\thanks{This work is
supported in part by National Science Foundations
of China (NSFC 91330202, 11001259, 11371026, 11201501, 11031006, 2011CB309703, 11171251),
the National Basic Research Program (2012CB955804), the Major Research Plan of the National Natural Science Foundation of China (91430108), the Major Program of Tianjin University of Finance and Economics (ZD1302) and
the National Center for Mathematics and Interdisciplinary Science,
CAS and the President Foundation of AMSS-CAS.}}
\author{
Yunhui He\footnote{Department of Mathematics and Statistics,
Memorial University of Newfoundland, St. John's, NL, Canada ({\tt  yh6171@mun.ca})}, \ \
Yu Li\footnote{Research Center for Mathematics and Economics,
 Tianjin University of Finance and Economics, Tianjin 300222, China
({\tt liyu@lsec.cc.ac.cn})} \ \ and\ \
Hehu Xie\thanks{LSEC, NCMIS, Academy of Mathematics and Systems Science,
Chinese Academy of Sciences, Beijing 100190, China
({\tt hhxie@lsec.cc.ac.cn})}
}
\date{}
\maketitle
\begin{abstract}
We propose a new type of multilevel  method for solving eigenvalue problems
based on Newton iteration. With the proposed iteration method, solving eigenvalue problem on the finest
finite element space is replaced by solving a small scale eigenvalue problem in a coarse space and
solving a series of augmented linear problems, derived by Newton step in the corresponding series of finite element spaces.
 This iteration scheme improves overall efficiency of the finite element method for solving eigenvalue problems.
  Finally,  some numerical examples are provided to validate the efficiency of the proposed numerical scheme.
\vskip0.3cm {\bf Keywords.}  Eigenvalue problem, finite element method,
Newton's method, multilevel iteration.
\end{abstract}
\section{Introduction}
 The original purpose of Newton's method is to seek the root of an equation. With a suitable initial guess,
 Newton iteration is usually convergent. Furthermore, the convergence is at
 least quadratic in a neighborhood of a simple root. So Newton's iteration is an extremely powerful technique in numerical methods.
 Nowadays Newton's method is widely applied to minimization and maximization problems, multiplicative inverses
 of numbers and power series, solving transcendent equations, complex functions, nonlinear systems of equations.

Our work is to design a Newton's method to solve PDE eigenvalue problems.
Taking advantage of the rapid convergence of Newton's method,
  we design a Newton's method to solve eigenvalue problems, treating eigenvalue problem as a nonlinear equation.
 Some works \cite{Golub2012Matrix,Saad1992Numerical,Sleijpen2000Jacobi,Sleijpen2006Jacobi} had exploited the
 Newton's method for eigenvalue problems.
 The Newton's method (see \cite{Golub2012Matrix}) is based on
 an approximate eigenpair $(\lambda_{0},x_{0})$ and wishes to determine $\delta \lambda$ and $\delta x$ so
  that $(\lambda_0+\delta \lambda,x_0+\delta x)$ is an improved approximation of the exact eigenpair.

In recent decades, the study of solving large scale eigenvalue problems, arising from modern science and
engineering society, has become one of the major focuses of numerical analysts and engineers. However,
it is always a difficult task to solve high-dimensional eigenvalue problems which come from physical and chemical sciences.
About the solution of eigenvalue problems, \cite{Brandt1983Multigrid,Hackbusch2013Multi,Hackbusch1979computation,Lin2011Observation,Lin2011type,Shaidurov2013Multigrid}
and the references cited therein give some types of multilevel or multigrid schemes.

The aim of this paper is to present a type of multilevel iteration scheme based on
Newton's method for eigenvalue problems.
The standard Galerkin finite element method for eigenvalue problems
has been extensively investigated, e.g. Babu\v{s}ka and Osborn
\cite{Babuska1989Finite,Babuska1991Eigenvalue}, Chatelin \cite{Chatelin1983Spectral} and
references cited therein. Here we adopt some basic results in these
papers for our analysis.
The corresponding error and complexity discussion of the proposed iteration
scheme for the eigenvalue problem will be analyzed. Based
on the analysis, the new method can obtain optimal errors with an optimal
computational work when we can solve the associated augmented linear
problems with the optimal complexity.  Although the Newton's method is sensitive
to initial guess, we use multilevel
 technique to overcome this difficulty. Since it is easy to find a good approximation in
 the coarse grid, which provides a good initial guess for the fine grid, the Newton type
 iteration method is reasonable.  According to the theory for mixed finite element method,
 we prove the existence and the uniqueness of the solution to the proposed scheme.


This paper is organized at follows. In Section 2, we introduce the
finite element method for the eigenvalue problem and give the corresponding basic
error estimates. A type of one Newton iteration step
 is presented and the error estimates of the proposed scheme are analyzed in Section 3.
In Section 4, we propose a type of multilevel iteration scheme for multi eigenvalues solving.
The computational work estimate of the multilevel iteration method is discussed
in Section 5. In Section 6, two numerical examples are presented to validate our
theoretical analysis. Some concluding remarks are provided in the final section.
\section{Finite element method for eigenvalue problems}
In this section, we introduce some notation and error estimates of
the finite element approximation for the eigenvalue problem.
The letter $C$ (with or without subscripts) denotes a generic
positive constant which may be different at its different occurrences through
the paper. For convenience, the symbols $\lesssim$, $\gtrsim$ and $\approx$
will be used in this paper. These $x_1\lesssim y_1, x_2\gtrsim y_2$
and $x_3\approx y_3$, mean that $x_1\leq C_1y_1$, $x_2 \geq c_2y_2$
and $c_3x_3\leq y_3\leq C_3x_3$ for some constants $C_1, c_2, c_3$
and $C_3$ that are independent of mesh sizes (see, e.g., \cite{Xu1992Iterative}).


In our methodology description, we are concerned with the following
model problem:

\noindent Find $(\lambda, u )\in \mathbb{R}\times V$ such that $b(u,u)=1$ and
\begin{eqnarray}\label{weak_problem}
a(u,v)&=&\lambda b(u,v),\quad \forall v\in V,
\end{eqnarray}
where $V:=H_0^1(\Omega)$, $a(\cdot, \cdot)$ and $b(\cdot,\cdot)$ are bilinear forms
defined by
\begin{eqnarray*}
a(u,v)=\int_{\Omega}\nabla u\nabla vd\Omega, \ \ \ \
b(u,v)=\int_{\Omega}uv d\Omega.
\end{eqnarray*}
In this paper, based on these two bilinear forms, we define the norms $\|\cdot\|_a$ and
$\|\cdot\|_b$ as follows
\begin{eqnarray*}
\|v\|_a^2=a(v,v),\ \ \ \|v\|_b^2=b(v,v).
\end{eqnarray*}
It is well known that the norm $\|\cdot\|_a$ is a norm in the space $V$ and $\|\cdot\|_b$
is a norm in the space $L^2(\Omega)$.

For the eigenvalue $\lambda$, there exists the following Rayleigh
quotient expression (see, e.g., \cite{Babuska1989Finite,Babuska1991Eigenvalue,Xu2001two})
\begin{eqnarray*}\label{Rayleigh_quotient}
 \lambda=\frac{a(u,u)}{b(u,u)}.
\end{eqnarray*}
From \cite{Babuska1991Eigenvalue,Chatelin1983Spectral}, we know the eigenvalue problem
\eqref{weak_problem} has an eigenvalue sequence $\{\lambda_j \}:$
$$0\leq\lambda_1\leq \lambda_2\leq\cdots\leq\lambda_k\leq\cdots,\ \ \
\lim_{k\rightarrow\infty}\lambda_k=\infty,$$ and the associated
eigenfunctions
$$u_1, u_2, \cdots, u_k, \cdots, $$
where $b(u_i,u_j)=\delta_{ij}$, $\delta_{ij}$ is Kronecker notation. In the sequence $\{\lambda_j\}$, the
$\lambda_j$ are repeated according to their geometric multiplicity.
In order to give the error estimates, let $M(\lambda_i)$ denote the eigenfunction space corresponding to the
eigenvalue $\lambda_i$ which is defined by
\begin{eqnarray*}
M(\lambda_i)&=&\big\{w\in V: w\ \mbox{ is\ an\ eigenfunction\ of\
\eqref{weak_problem}}\mbox{  corresponding\ to}\ \lambda_i\big\}.
\end{eqnarray*}

Now, let us define the finite element approximations of the problem \eqref{weak_problem}. First we
generate a shape-regular decomposition of the computing domain
$\Omega \subset \mathbb{R}^d$ $(d = 2,3)$ into triangles or rectangles for $d = 2$
(tetrahedrons or hexahedrons for $d = 3$). The diameter of a cell $K\in \mathcal{T}_h$ is
denoted by $h_K$. The mesh diameter $h$ describes the maximum diameter of all cells
$K \in \mathcal{T}_h$ . Based on the mesh $\mathcal{T}_h$, we can construct the
linear finite element space denoted by $V_h\subset V$.
We assume that the finite element space $V_h$ satisfies the following assumption:\\
For any $w \in V$
\begin{eqnarray}\label{Approximation_Property}
\lim_{h\rightarrow0}\inf_{v_h\in V_h}\|w-v_h\|_a = 0.
\end{eqnarray}

The finite element approximation for \eqref{weak_problem} is defined as follows:
Find $(\bar\lambda_h,\bar u_h) \in \mathbb{R}\times V_h$ such that
$b(\bar u_h,\bar u_h)=1$ and
\begin{eqnarray}\label{weak_problem_Discrete}
a(\bar u_h,v_h)=\bar\lambda_h b(\bar u_h,v_h), \quad \forall v_h\in V_h.
\end{eqnarray}

From \eqref{weak_problem_Discrete}, we know the following
Rayleigh quotient expression for $\bar{\lambda}_h$ holds
(see, e.g., \cite{Babuska1989Finite,Babuska1991Eigenvalue,Xu2001two})
\begin{eqnarray*}\label{eigenvalue_Rayleigh}
\bar{\lambda}_h &=&\frac{a(\bar{u}_h,\bar{u}_h)}{b(\bar{u}_h,\bar{u}_h)}.
\end{eqnarray*}
Similarly, we know from \cite{Babuska1991Eigenvalue,Chatelin1983Spectral} the eigenvalue
problem \eqref{weak_problem_Discrete} has eigenvalues
$$0<\bar{\lambda}_{1,h}\leq \bar{\lambda}_{2,h}\leq\cdots
\leq \bar{\lambda}_{k,h}\leq\cdots\leq \bar{\lambda}_{N_h,h},$$
and the corresponding eigenfunctions
$$\bar{u}_{1,h}, \bar{u}_{2,h},\cdots, \bar{u}_{k,h}, \cdots, \bar{u}_{N_h,h},$$
where $b(\bar{u}_{i,h},\bar{u}_{j,h})=\delta_{ij}, 1\leq i,j\leq N_h$ ($N_h$ is
the dimension of the finite element space $V_h$).

From the minimum-maximum principle (see, e.g., \cite{Babuska1989Finite,Babuska1991Eigenvalue}),
the following upper bound result holds
$${\lambda}_i\leq \bar{\lambda}_{i,h}, \ \ \ i=1,2,\cdots, N_h.$$
Similarly, let $M_h(\lambda_i)$ denote the approximate eigenfunction space corresponding to the
eigenvalue $\lambda_i$ which is defined by
\begin{eqnarray*}
M_h(\lambda_i)&=&\big\{w_h\in V_h: w_h\ \mbox{ is\ an\ eigenfunction\ of\
\eqref{weak_problem_Discrete}} \nonumber\\
&&\ \ \ \ \ \ \  \mbox{  corresponding\ to}\ \lambda_i\big\}. 
\end{eqnarray*}
From \cite{Babuska1989Finite,Babuska1991Eigenvalue}, each eigenvalue $\bar{\lambda}_{i,h}$ can be defined as follows
\begin{eqnarray}\label{Eigenvalue_h_definition}
\bar{\lambda}_{i,h}&=&\inf_{v_h\in V_h\atop v_h\perp M_h(\lambda_j)\ \mbox{ for}\ \lambda_j<\lambda_i}
\frac{a(v_h,v_h)}{b(v_h,v_h)}.
\end{eqnarray}

In order to give the error estimate result for the eigenvalue problems by the finite element method,
we define
\begin{eqnarray}\label{def:delta}
\delta_h(\lambda_i)=\sup_{w\in M(\lambda_i),\|w\|_a=1}\inf_{v_h\in
V_h}\|w-v_h\|_a,
\end{eqnarray}
and $\eta_a(h)$ as
\begin{eqnarray}\label{def:eta}
\eta_a(h)=\sup_{f\in V, \|f\|_b=1}\inf_{v_h\in V_h}\|T f-v_h\|_a,
\end{eqnarray}
where the operator $T: V'\rightarrow V$ is defined as
\begin{eqnarray*}
a(Tf,v)&=&b(f,v),\ \ \ \forall f\in V', \ \ \forall v\in V.
\end{eqnarray*}

There exist the following error estimates for the eigenpair approximations by finite element method.
\begin{proposition}(\cite[Lemma 3.7, (3.29b)]{Babuska1989Finite}, \cite[ P. 699]{Babuska1991Eigenvalue} and
\cite{Chatelin1983Spectral})\label{Error_estimate_Proposition}

\noindent(i) For any eigenfunction approximation $\bar u_{i,h}$ of
\eqref{weak_problem_Discrete} $(i = 1, 2, \cdots, N_h)$, there is an
eigenfunction $u_i$ of \eqref{weak_problem} corresponding to
$\lambda_i$ such that $\|u_i\|_b = 1$ and
\begin{eqnarray}\label{Eigenfunction_Error}
\|u_i-\bar{u}_{i,h}\|_a \leq C\delta_h(\lambda_i).
\end{eqnarray}
Furthermore,
\begin{eqnarray}\label{Eigenfunction_Error_Nagative}
\|u_i- \bar{u}_{i,h}\|_{b} \leq C\eta_a(h)\|u_i - \bar u_{i,h}\|_a.
\end{eqnarray}
(ii) For each eigenvalue, we have
\begin{eqnarray}\label{Eigenvalue_Error}
\lambda_i \leq \bar{\lambda}_{i,h}\leq \lambda_i +
C\delta_h^2(\lambda_i).
\end{eqnarray}
Here and hereafter $C$ is some constant depending on $\lambda_i$ but independent of  the mesh size $h$.
\end{proposition}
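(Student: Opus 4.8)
The plan is to recast \eqref{weak_problem} and \eqref{weak_problem_Discrete} in operator form and then invoke the abstract spectral approximation theory of Babu\v{s}ka--Osborn (equivalently Chatelin). Alongside the solution operator $T$ already introduced, define its Galerkin analogue $T_h\colon V'\to V_h$ by $a(T_hf,v_h)=b(f,v_h)$ for all $v_h\in V_h$. Both $T$ and $T_h$ are compact, and on $L^2(\Omega)$ they are self-adjoint and positive with respect to $a(\cdot,\cdot)$; moreover $(\lambda,u)$ solves \eqref{weak_problem} iff $Tu=\lambda^{-1}u$, and likewise $(\bar\lambda_{i,h},\bar u_{i,h})$ corresponds to $T_h\bar u_{i,h}=\bar\lambda_{i,h}^{-1}\bar u_{i,h}$. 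The first point is the identity $T_h=P_hT$, where $P_h\colon V\to V_h$ is the Ritz ($a$-orthogonal) projection. This gives, for any $w\in M(\lambda_i)$, that $Tw=\lambda_i^{-1}w$ and $T_hw=\lambda_i^{-1}P_hw$, hence
$$\|(T-T_h)w\|_a=\lambda_i^{-1}\inf_{v_h\in V_h}\|w-v_h\|_a\le\lambda_i^{-1}\delta_h(\lambda_i)\|w\|_a .$$
Also $\sup_{\|f\|_b=1}\|(T-T_h)f\|_a=\eta_a(h)\to 0$ by \eqref{Approximation_Property}, \eqref{def:eta}, so the discrete spectrum converges to the continuous one cluster by cluster without spurious modes.

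For part (i), I would apply the standard spectral gap estimate: the distance between $M(\lambda_i)$ and the discrete eigenspace $M_h(\lambda_i)$ (measured in the $\|\cdot\|_a$-gap) is bounded by $C\|(T-T_h)|_{M(\lambda_i)}\|_a$. Combined with the displayed bound this produces an eigenfunction $u_i$ of \eqref{weak_problem}, normalized by $\|u_i\|_b=1$, with $\|u_i-\bar u_{i,h}\|_a\le C\delta_h(\lambda_i)$, which is \eqref{Eigenfunction_Error}. For the negative-norm estimate \eqref{Eigenfunction_Error_Nagative} I would run an Aubin--Nitsche duality argument: setting $e=u_i-\bar u_{i,h}$, represent $\|e\|_b$ by testing the error equation against $Tg$ for $\|g\|_b=1$, subtract an arbitrary $v_h\in V_h$ using Galerkin orthogonality of $e$, and bound $\|Tg-v_h\|_a$ by $\eta_a(h)$. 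The only stray contribution, arising because $\|\bar u_{i,h}\|_b=1$ does not exactly match $\|u_i\|_b=1$ in the $b$-pairing, is of order $\|e\|_b^2$ and is absorbed.

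For part (ii), the lower bound $\lambda_i\le\bar\lambda_{i,h}$ is the min--max principle already recorded in the text. For the upper bound I would use the Rayleigh quotient identity: for $0\neq w\in V$, with $u$ the $b$-orthogonal projection of $w$ onto $M(\lambda_i)$,
$$\frac{a(w,w)}{b(w,w)}-\lambda_i=\frac{\|w-u\|_a^2-\lambda_i\|w-u\|_b^2}{\|w\|_b^2}.$$
Taking $w=\bar u_{i,h}$, using $\|\bar u_{i,h}\|_b=1$, dropping the nonpositive term $-\lambda_i\|w-u\|_b^2$, and noting that $\|\bar u_{i,h}-u\|_a$ is controlled by the $a$-error of part (i), one gets $\bar\lambda_{i,h}-\lambda_i\le C\delta_h^2(\lambda_i)$, i.e. \eqref{Eigenvalue_Error}; alternatively, retaining the negative term and estimating it via \eqref{Eigenfunction_Error_Nagative} gives the sharper two-term form.

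The step I expect to be the main obstacle is the multiplicity bookkeeping: the abstract theory furnishes convergence of the total spectral projection associated with the cluster near $\lambda_i$, not of individual eigenfunctions, so obtaining a single $u_i$ with the normalization $\|u_i\|_b=1$ requires selecting $\bar u_{i,h}$'s partner inside $M(\lambda_i)$ through that projection and verifying the normalization can be imposed consistently. This is also where the constants $C=C(\lambda_i)$ enter (through the separation of $\lambda_i$ from the rest of the spectrum), and one must order the arguments so that \eqref{Eigenvalue_Error} does not circularly rely on \eqref{Eigenfunction_Error_Nagative}; keeping the eigenvalue upper bound in the one-term form $\bar\lambda_{i,h}-\lambda_i\le C\delta_h^2(\lambda_i)$ sidesteps this.
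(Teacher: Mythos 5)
Your proposal is correct and follows the standard Babu\v{s}ka--Osborn/Chatelin route (operator form with $T_h=P_hT$, spectral approximation of the compact self-adjoint solution operator, Aubin--Nitsche duality for the $\|\cdot\|_b$ bound, and the Rayleigh-quotient identity for the eigenvalue bound), which is exactly what the paper relies on: it gives no proof of this proposition and simply cites those references. Your closing remarks on the multiplicity bookkeeping and on keeping the eigenvalue bound independent of \eqref{Eigenfunction_Error_Nagative} accurately identify the only delicate points in the cited arguments.
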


\section{A Newton iteration method for eigenvalue problem}
The aim of this section is to present a type of one Newton iteration step
to improve the accuracy of the given eigenpair approximations.
This iteration method only contains solving augmented linear problems
in a finer finite element space.
Here we only state the numerical method for the first and simple
eigenvalue. In the next section, we will show the case of multi eigenvalues.

 For the analysis in this paper, we introduce the error expansion of the
eigenvalue by the Rayleigh quotient formula which comes from
\cite{Babuska1989Finite,Babuska1991Eigenvalue,Lin1996construction,Xu2001two}.
\begin{lemma}[\cite{Babuska1989Finite,Babuska1991Eigenvalue,Lin1996construction,Xu2001two}]\label{Rayleigh_Quotient_error_theorem}
Assume $(\bar\lambda_h,\bar u_h)$ is a true solution of the eigenvalue problem
\eqref{weak_problem_Discrete} and  $0\neq \psi_h\in V_h$. Let us define
\begin{eqnarray*}\label{rayleighw}
\widehat{\lambda}_h=\frac{a(\psi_h,\psi_h)}{b(\psi_h,\psi_h)}.
\end{eqnarray*}
Then we have
\begin{eqnarray*}\label{rayexpan}
\widehat{\lambda}_h-\bar\lambda_h
&=&\frac{a(\bar u_h-\psi_h,\bar u_h-\psi_h)}{b(\psi_h,\psi_h)}-\bar\lambda_h
\frac{b(\bar u_h-\psi_h,\bar u_h-\psi_h)}{b(\psi_h,\psi_h)}.
\end{eqnarray*}
\end{lemma}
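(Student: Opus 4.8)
The plan is to verify the identity by direct algebraic manipulation, exploiting the fact that $(\bar\lambda_h,\bar u_h)$ is an exact discrete eigenpair so that the Galerkin orthogonality $a(\bar u_h,v_h)=\bar\lambda_h b(\bar u_h,v_h)$ holds for \emph{all} $v_h\in V_h$, in particular for $v_h=\psi_h$. First I would write the right-hand side by expanding the two bilinear forms in the numerators. Using bilinearity and symmetry of $a(\cdot,\cdot)$ and $b(\cdot,\cdot)$, we have
\begin{eqnarray*}
a(\bar u_h-\psi_h,\bar u_h-\psi_h)&=&a(\bar u_h,\bar u_h)-2a(\bar u_h,\psi_h)+a(\psi_h,\psi_h),\\
b(\bar u_h-\psi_h,\bar u_h-\psi_h)&=&b(\bar u_h,\bar u_h)-2b(\bar u_h,\psi_h)+b(\psi_h,\psi_h).
\end{eqnarray*}
The key step is then to use the discrete eigenvalue equation to replace the cross terms: $a(\bar u_h,\psi_h)=\bar\lambda_h b(\bar u_h,\psi_h)$, and to note $a(\bar u_h,\bar u_h)=\bar\lambda_h b(\bar u_h,\bar u_h)$ as well. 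With the normalization $b(\bar u_h,\bar u_h)=1$ this gives $a(\bar u_h,\bar u_h)=\bar\lambda_h$.

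Substituting these into the expression
\begin{eqnarray*}
\frac{a(\bar u_h-\psi_h,\bar u_h-\psi_h)}{b(\psi_h,\psi_h)}-\bar\lambda_h\frac{b(\bar u_h-\psi_h,\bar u_h-\psi_h)}{b(\psi_h,\psi_h)},
\end{eqnarray*}
the $a(\bar u_h,\bar u_h)$ term cancels against $\bar\lambda_h b(\bar u_h,\bar u_h)$, the two cross terms $-2a(\bar u_h,\psi_h)$ and $+2\bar\lambda_h b(\bar u_h,\psi_h)$ cancel against each other, and what survives in the numerator is $a(\psi_h,\psi_h)-\bar\lambda_h b(\psi_h,\psi_h)$. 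Dividing by $b(\psi_h,\psi_h)$ yields $\frac{a(\psi_h,\psi_h)}{b(\psi_h,\psi_h)}-\bar\lambda_h=\widehat\lambda_h-\bar\lambda_h$, which is exactly the left-hand side.

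There is no real obstacle here: the proof is a short computation, and the only point requiring any care is bookkeeping the cancellations and making sure the discrete eigenvalue relation is applied to $\psi_h\in V_h$ (legitimate, since $\psi_h$ is a test function in $V_h$) and not to an arbitrary element of $V$. I would present the chain of equalities in a single \texttt{align*} block, starting from the definition of $\widehat\lambda_h-\bar\lambda_h$, inserting $a(\bar u_h,\bar u_h)/b(\bar u_h,\bar u_h)=\bar\lambda_h$ and the cross-term identity, and collecting terms to arrive at the stated formula.
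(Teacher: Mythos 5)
Your computation is correct and is exactly the standard expansion argument behind this identity (the paper itself offers no proof, simply citing the references, and those references prove it precisely this way: expand both quadratic forms, use $a(\bar u_h,v_h)=\bar\lambda_h b(\bar u_h,v_h)$ with $v_h=\bar u_h$ and $v_h=\psi_h$ to cancel the diagonal and cross terms, and divide by $b(\psi_h,\psi_h)$). One trivial remark: you do not even need the normalization $b(\bar u_h,\bar u_h)=1$, since the term $a(\bar u_h,\bar u_h)-\bar\lambda_h b(\bar u_h,\bar u_h)$ vanishes regardless.
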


\subsection{Newton iteration for eigenvalue problem}
This subsection introduces the main idea that deduces our numerical method in this paper.
Here, we use the Newton iteration idea to solve the eigenproblem \eqref{weak_problem}:

\noindent Find $(\lambda,u)\in\mathbb{R}\times V$ such that
\begin{equation}\label{Problem_Mixed_Form}
\left\{
\begin{array}{rcl}
a(u,v)-\lambda b(u,v)&=&0,\ \ \ \ \forall v\in V, \\
b(u,u)-1&=&0.
\end{array}
\right.
\end{equation}
If we have an eigenpair approximation $(\mu_0,u_0)$ with $b(u_0,u_0)=1$,
the Newton iteration method
for \eqref{Problem_Mixed_Form} is to find
$(\widetilde{\lambda}, \widetilde{u})\in \mathbb{R}\times V$
 such that
\begin{equation}\label{Newton_Iteration_Mixed}
\left\{
\begin{array}{rcl}
a(\widetilde{u}-u_0,v)-\mu_0\cdot b(\widetilde{u}-u_0,v)
-(\widetilde{\lambda}-\mu_0)b(u_0,v)
&=&\\
-(a(u_0,v)-\mu_0\cdot b(u_0,v)),&& \forall v\in V,\\
-b(\widetilde{u}-u_0,u_0)&=&0.
\end{array}
\right.
\end{equation}
After simplifying \eqref{Newton_Iteration_Mixed}, we have the following
equation for the new eigenpair
approximation $(\widetilde{\lambda},\widetilde{u})\in\mathbb{R}\times V$
\begin{equation}\label{Newton_Iteration_Mixed_2}
\left\{
\begin{array}{rcl}
a(\widetilde{u},v)-\mu_0\cdot b(\widetilde{u},v)-\widetilde{\lambda}
b(u_0,v)&=&-\mu_{0} b(u_0,v),\ \ \ \forall v\in V, \\
-b(\widetilde{u}-u_0,u_0)&=&0.
\end{array}
\right.
\end{equation}
Now, we come to prove that the mixed problem \eqref{Newton_Iteration_Mixed_2} has only one solution.
For this aim, we define the following bilinear forms
\begin{equation}\label{Mixed_Bilinear_Forms}
A_{\mu_{0}}(u,v)=a(u,v)-\mu_{0} b(u,v), \ \ \quad\ \  B(v,\nu)=-\nu b(u_0,v),
\end{equation}
where $u\in V$, $v\in V$, $\nu\in W=\mathbb{R}$ and
$\mu_{0}=\frac{a(u_0,u_0)}{b(u_0,u_0)}$.

Assume that $f\in V'$ and $g\in W'$. We consider the following mixed problem:

\noindent Find $(u,\lambda)\in V\times W$ such that
\begin{equation}\label{mixing_form}
\left\{
\begin{array}{rcl}
A_{\mu_{0}}(u,v)+B(v,\lambda)&=&f(v),\ \ \ \ \ \forall v\in V,\\
B(u,\nu)&=&g(\nu),\ \ \ \ \ \forall \nu\in W.
\end{array}
\right.
\end{equation}
About the existence and uniqueness of problem \eqref{mixing_form},
 the following theorem holds.
\begin{theorem}\label{LBB_Conditions_Theorem}
Assume $u_0$ is an eigenfunction approximation to $M(\lambda_1)$ with
sufficiently small error and $\|u_0\|_b=1$. Then the bilinear forms defined in
\eqref{Mixed_Bilinear_Forms} satisfy the following conditions
\begin{enumerate}
\item There exists $\alpha>0$ (depends on $\lambda_2-\lambda_1$) such that
\begin{equation}\label{A_sup}
A_{\mu_{0}}(v,v)\geq \alpha \|v\|_a^2,\ \ \ \ \ \forall v\in V_0,
\end{equation}
where $V_0=\{v | B(v,\nu)=0,\ \forall \nu\in W\}=\{v|b(u_0,v)=0\}$.
\item There exists $\sigma>0$ (depends on $1/\mu_0$) such that
\begin{equation}\label{B_sup}
\sup_{v\in V}\frac{B(v,\nu)}{\|v\|_a}\geq \sigma |\nu|,
\ \ \ \ \ \forall \nu\in W.
\end{equation}
\end{enumerate}
Based on these two conditions, the mixed equation \eqref{mixing_form} has only one
solution.
\end{theorem}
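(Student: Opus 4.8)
The plan is to read the mixed system \eqref{mixing_form} as a saddle point problem of Babu\v{s}ka--Brezzi type and to verify the two structural hypotheses of the classical well-posedness theory for such problems, namely the coercivity \eqref{A_sup} of $A_{\mu_{0}}$ on the kernel $V_0$ and the inf--sup (LBB) condition \eqref{B_sup} for $B$. Continuity of the two forms is immediate: $a(\cdot,\cdot)$ is bounded on $V$, $b(\cdot,\cdot)$ is bounded on $V$ through the Poincar\'e inequality $\|v\|_b\le C_P\|v\|_a$, so that $|A_{\mu_{0}}(u,v)|\lesssim\|u\|_a\|v\|_a$ and $|B(v,\nu)|\le C_P|\nu|\,\|v\|_a$. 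Granting \eqref{A_sup} and \eqref{B_sup}, the existence and uniqueness of the solution of \eqref{mixing_form} is exactly the conclusion of Brezzi's theorem for mixed variational problems (the multiplier space $W=\mathbb{R}$ being finite dimensional causes no extra difficulty), so the final assertion of the theorem requires no separate argument.

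The inf--sup bound \eqref{B_sup} is obtained by testing with a scalar multiple of $u_0$ itself. Since $\|u_0\|_b=1$ we have $\mu_{0}=a(u_0,u_0)$, hence $\|u_0\|_a^2=\mu_{0}$; taking $v=-\,\mathrm{sgn}(\nu)\,u_0$ yields
\begin{eqnarray*}
\frac{B(v,\nu)}{\|v\|_a}=\frac{|\nu|\,b(u_0,u_0)}{\|u_0\|_a}=\frac{|\nu|}{\sqrt{\mu_{0}}},
\end{eqnarray*}
so \eqref{B_sup} holds with $\sigma=1/\sqrt{\mu_{0}}$, which indeed depends only on $1/\mu_{0}$.

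The core of the proof is the coercivity \eqref{A_sup}. Fix the exact first eigenpair $(\lambda_1,u_1)$ with $\|u_1\|_b=1$ that is close to $u_0$; by Proposition \ref{Error_estimate_Proposition} both $\|u_0-u_1\|_a$ and $\|u_0-u_1\|_b$ can be made as small as we like once the error of $u_0$ is small. Expanding any $v\in V_0$ in the $b$-orthonormal eigenbasis, $v=\sum_{k\ge1}c_k u_k$, we have $\|v\|_b^2=\sum_k c_k^2$, $\|v\|_a^2=\sum_k\lambda_k c_k^2$, and
\begin{eqnarray*}
A_{\mu_{0}}(v,v)=\sum_{k\ge1}(\lambda_k-\mu_{0})c_k^2=(\lambda_1-\mu_{0})c_1^2+\sum_{k\ge2}(\lambda_k-\mu_{0})c_k^2 .
\end{eqnarray*}
Then I would control three ingredients: (i) the continuous counterpart of Lemma \ref{Rayleigh_Quotient_error_theorem} gives $0\le\mu_{0}-\lambda_1=\|u_0-u_1\|_a^2-\lambda_1\|u_0-u_1\|_b^2\lesssim\|u_0-u_1\|_a^2$, a higher-order quantity; (ii) the constraint $b(u_0,v)=0$ together with $c_1=b(u_1,v)$ gives $|c_1|=|b(u_0-u_1,v)|\le\|u_0-u_1\|_b\,\|v\|_b\le C_P\|u_0-u_1\|_b\,\|v\|_a$, so $c_1^2$ is of order $\|u_0-u_1\|_b^2\,\|v\|_a^2$; (iii) for $k\ge2$ one has $\lambda_k-\mu_{0}\ge(1-\mu_{0}/\lambda_2)\lambda_k$ with $1-\mu_{0}/\lambda_2=(\lambda_2-\lambda_1)/\lambda_2-(\mu_{0}-\lambda_1)/\lambda_2>0$ once $u_0$ is close enough. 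Substituting $\sum_{k\ge2}\lambda_k c_k^2=\|v\|_a^2-\lambda_1 c_1^2$ and collecting terms, the $c_1$-contributions combine into a single multiple of $c_1^2$, which by (ii) is absorbed as an $O(\|u_0-u_1\|_b^2)\|v\|_a^2$ perturbation; what is left is $A_{\mu_{0}}(v,v)\ge\alpha\|v\|_a^2$ for any $\alpha$ strictly below $(\lambda_2-\lambda_1)/\lambda_2$, e.g.\ $\alpha=(\lambda_2-\lambda_1)/(2\lambda_2)$, which matches the stated dependence on $\lambda_2-\lambda_1$.

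The main obstacle is exactly this quantitative estimate: $A_{\mu_{0}}$ is indefinite on all of $V$ (it is nonpositive on $M(\lambda_1)$), so coercivity can hold only on $V_0$, and one must convert the hypothesis ``$u_0$ has sufficiently small error'' into explicit smallness conditions guaranteeing $\mu_{0}<\lambda_2$ with a uniform gap and that the $c_1$-contamination of $V_0$ is genuinely of higher order. Everything else---continuity of the forms, the LBB bound, and the final appeal to Brezzi's theorem---is routine.
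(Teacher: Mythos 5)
Your proposal is correct and follows essentially the same route as the paper's proof: both verify Brezzi's conditions by (a) testing $B$ with a multiple of $u_0$ for the inf--sup bound, and (b) proving coercivity on $V_0$ by splitting $v$ into its $M(\lambda_1)$-component and the rest, showing the constraint $b(u_0,v)=0$ forces that component to be $O(\delta)\|v\|$, and combining this with $|\mu_0-\lambda_1|=O(\delta^2)$ and the spectral gap $\lambda_2-\lambda_1$ (your eigenbasis expansion is just the diagonalized form of the paper's decomposition $v=v_1+v_1^{\perp}$). The one small divergence is in your favor: $b(u_0,u_0)/\|u_0\|_a=1/\sqrt{\mu_0}$ as you compute, whereas the paper's displayed value $\sigma=1/\mu_0$ overlooks that $\|u_0\|_a=\sqrt{\mu_0}$; this is harmless for the theorem since either constant is positive.
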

\begin{proof}

We decompose $u_0$ as $u_0=w_{1}+w_{1}^{\perp}$ such that $w_{1}\in M(\lambda_1)$ and
$w_{1}^{\perp}\perp M(\lambda_1)$.

Since $u_0$ ($\|u_0\|_b=1$) is an eigenfunction approximation to $M(\lambda_1)$ with sufficiently
small error, there exists 
a small enough number $\delta$ such that
\begin{eqnarray}\label{Error_u_0}
\|u_0-w_1\|_a&\leq& \delta.
\end{eqnarray}
From Lemma \ref{Rayleigh_Quotient_error_theorem}, we also have
\begin{eqnarray}\label{Error_mu}
|\mu_{0}-\lambda_1|&\leq& C\delta^2.
\end{eqnarray}
Since \eqref{Error_u_0} and
$\|u_0\|_b^2=\|w_{1}\|_b^2+\|w_{1}^{\perp}\|_b^2$,
$w_{1}^{\perp}$ and $w_{1}$
have estimates
$$\|w_1^{\perp}\|_b\leq C\|w_{1}^{\perp}\|_a\leq C\delta,\quad  \|w_{1}\|_b\geq 1-C\delta.$$
We also do the decomposition $v=v_1+v_1^{\perp}$ with
$v_1\in M(\lambda_1)$ and $v_1^{\perp}\perp M(\lambda_1)$ for $v\in V_0$.
Since $b(w_{1}+w_{1}^{\perp},v_1+v_1^{\perp})=0$, we have
\begin{eqnarray*}
\|v_1\|_b\|w_{1}\|_b=|b(v_1,w_{1})|=|-b(v_1^{\perp},w_{1}^{\perp})|
= |-b(v,w_{1}^{\perp})|
\leq C\delta\|v\|_b.
\end{eqnarray*}
Then $\|v_1\|_b$ has the following estimate
\begin{eqnarray}\label{Estimate_v_1}
\|v_1\|_b \leq \frac{C\delta}{1-C\delta}\|v\|_b \leq C\delta \|v\|_b.
\end{eqnarray}
From \eqref{Estimate_v_1} and the property $\|v\|_b^2=\|v_1\|_b^2+\|v_1^{\perp}\|_b^2$,
 the following estimates hold
\begin{eqnarray*}
b(v,v)&=&b(v_1,v_1)+b(v_1^{\perp},v_1^{\perp})\nonumber\\
&\leq& C \delta^2b(v,v)+\frac{1}{\lambda_2}a(v_1^{\perp},v_1^{\perp})\nonumber\\
&\leq& C \delta^2b(v,v)+\frac{1}{\lambda_2}a(v,v).
\end{eqnarray*}
Thus we have the following inequality
\begin{eqnarray}\label{Error_v_a_b}
b(v,v) &\leq& \frac{1}{\lambda_2(1-C\delta^2)}a(v,v).
\end{eqnarray}
From \eqref{Error_mu}, \eqref{Error_v_a_b} and the definition of $A_{\mu}(\cdot,\cdot)$,
the following inequalities hold
\begin{eqnarray*}\label{Estimate_A_mu}
a(v,v)-\mu_{0} b(v,v)&\geq&\Big(1-\frac{\mu_{0}}{\lambda_2(1-C\delta^2)}\Big)a(v,v)\nonumber\\
&\geq& \frac{\lambda_2(1-C\delta^2)-\mu_{0}}{\lambda_2(1-C\delta^2)}a(v,v)\nonumber\\
&\geq&\frac{\lambda_2-\lambda_1-C\delta^2}{\lambda_2(1-C\delta^2)}a(v,v).
\end{eqnarray*}
It means \eqref{A_sup} holds for
$\alpha=(\lambda_2-\lambda_1-C\delta^2)/(\lambda_2(1-C\delta^2))$
when $\delta$ is small enough.

Now, we come to prove \eqref{B_sup}.
From the definitions of $B(\cdot,\cdot)$ and $\mu$, we have
\begin{equation*}
\sup_{v\in V}\frac{B(v,\nu)}{\|v\|_a}\geq |\nu|\frac{b(u_0,u_0)}{\|u_0\|_a}
=\frac{|\nu|}{\mu_{0}},\ \ \ \ \forall \nu\in W. 
\end{equation*}
It means that \eqref{B_sup} holds for
\[
\sigma=\frac{1}{\mu_{0}}.
\]
From the theory for the mixed finite element method \cite{Brezzi2012Mixed}, there exists only one solution for
the equation \eqref{mixing_form}.
\end{proof}
\begin{corollary}\label{INF_SUP_Inequality_Corollary}
Under the conditions of Theorem \ref{LBB_Conditions_Theorem}, the following inequality holds
\begin{eqnarray}\label{INF_SUP_Inequality}
\|w\|_a+|\gamma| \leq C_4\sup_{0\neq (v,\nu)\in V\times W}\frac{A_{\mu_0}(w,v)+B(v,\gamma)+B(w,\nu)}{\|v\|_a+|\nu|},
\end{eqnarray}
for any $(w,\gamma)\in V\times W$. The constant $C_4$ depends on $1/(\lambda_2-\lambda_1)$ and $\lambda_1$.
\end{corollary}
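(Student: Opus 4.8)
This corollary is the classical \emph{combined inf--sup} (Babu\v{s}ka--Brezzi) estimate that is equivalent to well-posedness of the saddle point problem \eqref{mixing_form}; one could in principle quote it from the abstract mixed theory, but since the claim records the dependence of $C_4$ on the spectral gap we must derive it quantitatively from the two Brezzi conditions \eqref{A_sup}--\eqref{B_sup} of Theorem \ref{LBB_Conditions_Theorem}, tracking how the constant depends on $\alpha\sim\lambda_2-\lambda_1$ and on $\sigma,\mu_0\sim\lambda_1$. Fix $(w,\gamma)\in V\times W$ and let $S$ denote the supremum on the right-hand side of \eqref{INF_SUP_Inequality}; the goal is $\|w\|_a+|\gamma|\le C_4 S$.

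First I would split off the ``multiplier part'' of $w$. Since $b(u_0,u_0)=1$, put $c:=b(u_0,w)$ and $w_0:=w-cu_0$; then $b(u_0,w_0)=0$, so $w_0\in V_0$, and $\|w\|_a\le\|w_0\|_a+|c|\sqrt{\mu_0}$ using $\|u_0\|_a^2=\mu_0$. Testing the supremum with $v=0$, $\nu=\pm1$ collapses the numerator to $B(w,\nu)=-\nu c$ and the denominator to $1$, so $|c|\le S$. Next I would bound $\|w_0\|_a$ by coercivity on the kernel: testing with $v=w_0$, $\nu=0$ and using $B(w_0,\gamma)=-\gamma b(u_0,w_0)=0$ gives $A_{\mu_0}(w,w_0)\le S\,\|w_0\|_a$, while $A_{\mu_0}(w,w_0)=A_{\mu_0}(w_0,w_0)+c\,A_{\mu_0}(u_0,w_0)\ge\alpha\|w_0\|_a^2-|c|\,|A_{\mu_0}(u_0,w_0)|$ by \eqref{A_sup}. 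Combining and dividing by $\|w_0\|_a$ yields $\alpha\|w_0\|_a\le S+|c|\,|A_{\mu_0}(u_0,w_0)|/\|w_0\|_a$.

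The one genuinely substantive estimate is that $A_{\mu_0}(u_0,w_0)$ is small. Here is where the hypothesis that $u_0$ is close to $M(\lambda_1)$ (not an arbitrary unit vector) is used: with the decomposition $u_0=w_1+w_1^{\perp}$ from the proof of Theorem \ref{LBB_Conditions_Theorem}, we have $A_{\mu_0}(u_0,w_0)=a(u_0,w_0)$ since $b(u_0,w_0)=0$, and $a(u_0,w_0)=a(w_1,w_0)+a(w_1^{\perp},w_0)=\lambda_1 b(w_1,w_0)+a(w_1^{\perp},w_0)$; since $b(w_1,w_0)=-b(w_1^{\perp},w_0)$, $\|w_1^{\perp}\|_a\le\delta$ and $\|w_0\|_b\le\lambda_1^{-1/2}\|w_0\|_a$, this gives $|A_{\mu_0}(u_0,w_0)|\le C\delta\|w_0\|_a$. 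Feeding $|c|\le S$ back in, $\alpha\|w_0\|_a\le(1+C\delta)S$, hence $\|w_0\|_a\le C\alpha^{-1}S$ and $\|w\|_a\le C(\alpha^{-1}+\sqrt{\mu_0})S$.

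Finally I would recover $|\gamma|$ from \eqref{B_sup}: choose $v_*\in V$ with $\|v_*\|_a=1$ and $B(v_*,\gamma)\ge\sigma|\gamma|$, test the supremum with $v=v_*$, $\nu=0$, and use the boundedness $|A_{\mu_0}(w,v_*)|\le C_a\|w\|_a$ (with $C_a$ involving $\mu_0$ via $\|v\|_b\le\lambda_1^{-1/2}\|v\|_a$) to obtain $\sigma|\gamma|\le S+C_a\|w\|_a$, whence $|\gamma|\le C\sigma^{-1}(1+C_a\alpha^{-1})S$ after inserting the bound on $\|w\|_a$. Adding the two inequalities gives \eqref{INF_SUP_Inequality} with $C_4$ assembled from $\alpha^{-1}\sim1/(\lambda_2-\lambda_1)$, $\sigma^{-1}$ and $\mu_0\sim\lambda_1$. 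The only points requiring care are the cross-term estimate $|A_{\mu_0}(u_0,w_0)|\le C\delta\|w_0\|_a$ and the requirement that $\delta$ be small enough (as in Theorem \ref{LBB_Conditions_Theorem}) so that $\alpha$ stays bounded away from zero; everything else is routine saddle-point bookkeeping.
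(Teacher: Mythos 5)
Your proof is correct, and it supplies something the paper itself does not: the paper states this corollary with no proof at all, implicitly deriving it from the abstract Babu\v{s}ka--Brezzi theory invoked at the end of the proof of Theorem \ref{LBB_Conditions_Theorem} (the citation to \cite{Brezzi2012Mixed}), where the combined inf--sup estimate is a standard consequence of coercivity on the kernel, the inf--sup condition for $B$, and continuity of the two forms. Your explicit derivation follows exactly the skeleton of that abstract argument --- split off the multiplier component $c=b(u_0,w)$, use kernel coercivity on $w_0=w-cu_0$, then recover $|\gamma|$ from the inf--sup condition for $B$ together with continuity of $A_{\mu_0}$ --- but it has the merit of actually tracking the constants, which is the only nontrivial content of the corollary as stated (the dependence of $C_4$ on $1/(\lambda_2-\lambda_1)$ through $\alpha$ and on $\lambda_1$ through $\sigma^{-1}=\mu_0$ and $\|u_0\|_a=\sqrt{\mu_0}$). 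The one step that goes beyond generic saddle-point bookkeeping, your bound $|A_{\mu_0}(u_0,w_0)|\le C\delta\|w_0\|_a$ via the decomposition $u_0=w_1+w_1^{\perp}$, is sound (using $a(w_1,w_0)=\lambda_1 b(w_1,w_0)=-\lambda_1 b(w_1^{\perp},w_0)$ and $\|w_1^{\perp}\|_a\le\delta$); in the generic theory one would instead only use $|A_{\mu_0}(u_0,w_0)|\le C_a\sqrt{\mu_0}\,\|w_0\|_a$, which already suffices and yields the same qualitative dependence of $C_4$, so your sharper cross-term estimate is a bonus rather than a necessity. The only cosmetic caveats are to treat the trivial case $w_0=0$ separately before dividing by $\|w_0\|_a$, and to note that the supremum defining $\sigma$ in \eqref{B_sup} is attained at $v=\pm u_0/\|u_0\|_a$, so the test element $v_*$ exists exactly rather than up to $\varepsilon$.
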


\subsection{One Newton iteration step}
Based on the discussion in the last subsection, we propose an one correction step to improve the
given eigenpair approximation. Assume we have obtained an eigenpair approximation
$(\lambda_{1,h_k},u_{1,h_k})\in\mathbb{R}\times V_{h_k}$ with $\|u_{1,h_k}\|_b=1$.
Now we introduce a type of iteration step to improve the accuracy of the
current eigenpair approximation $(\lambda_{1,h_k},u_{1,h_k})$. Let
$V_{h_{k+1}}\subset V$ be a finer finite element space such that
$V_{h_k}\subset V_{h_{k+1}}$. Based on this finer finite element space,
we define the following one Newton iteration step.
\begin{algorithm}\label{Correction_Step}
One Newton Iteration Step

\begin{enumerate}
\item Solve the augmented mixed problem:\\
Find $(\widehat{\lambda}_{1,h_{k+1}},\widehat{u}_{1,h_{k+1}}) \in
\mathbb{R}\times V_{h_{k+1}}$ such that
\begin{eqnarray}\label{JD_problem}
\left\{
\begin{array}{lcl}
a(\widehat{u}_{1,h_{k+1}},v_{h_{k+1}})
-\lambda_{1,h_k}b(\widehat{u}_{1,h_{k+1}},v_{h_{k+1}})
-\widehat{\lambda}_{1,h_{k+1}}b(u_{1,h_k},v_{h_{k+1}})&&\\
\quad\quad\quad\quad \ \ \ \ \ \ \ \ \ =-\lambda_{1,h_k}b(u_{1,h_k},v_{h_{k+1}}),
\ \ \ \ \ \ \forall v_{h_{k+1}}\in V_{h_{k+1}},&&\\
b(\widehat{u}_{1,h_{k+1}},u_{1,h_k})=b(u_{1,h_k},u_{1,h_k}).&&
\end{array}
\right.
\end{eqnarray}
\item Do the normalization for $\widehat{u}_{1,h_{k+1}}$ as
\begin{eqnarray}\label{Normalization}
u_{1,h_{k+1}}=\frac{\widehat{u}_{1,h_{k+1}}}{\|\widehat{u}_{1,h_{k+1}}\|_b}
\end{eqnarray}
and compute the Rayleigh quotient for $u_{1,h_{k+1}}$
\begin{eqnarray}\label{Rayleigh_Quotient_u_h_k_1}
\lambda_{1,h_{k+1}}&=&\frac{a(u_{1,h_{k+1}},u_{1,h_{k+1}})}
{b(u_{1,h_{k+1}},u_{1,h_{k+1}})}.
\end{eqnarray}
\end{enumerate}
Then we obtain a new eigenpair approximation
$(\lambda_{1,h_{k+1}},u_{1,h_{k+1}})\in \mathbb{R}\times V_{h_{k+1}}$.
Summarize the above two steps into
\begin{eqnarray*}
(\lambda_{1,h_{k+1}},u_{1,h_{k+1}})={\it
Newton\_Iteration}(\lambda_{1,h_k},u_{1,h_k},V_{h_{k+1}}).
\end{eqnarray*}
\end{algorithm}
\begin{theorem}\label{Error_Estimate_One_Correction_Theorem}
Assume $(\lambda_{1,h_k},u_{1,h_k})$ is a good enough approximation to $(\lambda_1,u_1)$
such that \eqref{A_sup}, \eqref{B_sup} hold and $\lambda_{1,h_k}=a(u_{1,h_k},u_{1,h_k})/b(u_{1,h_k},u_{1,h_k})$.
After one iteration step, the resultant approximation
$(\lambda_{1,h_{k+1}},u_{1,h_{k+1}})\in\mathbb{R}\times V_{h_{k+1}}$
has the following error estimates
\begin{eqnarray}
\|\bar u_{1,h_{k+1}}-u_{1,h_{k+1}}\|_a &\leq &C_5 \|\bar u_{1,h_{k+1}}-u_{1,h_k}\|_a^2,\label{Error_k_k+1_1}\\
|\bar\lambda_{1,h_{k+1}}-\lambda_{1,h_{k+1}}|&\leq& C_6 \|\bar u_{1,h_{k+1}}-u_{1,h_k}\|_a^4,\label{Error_k_k+1_3}
\end{eqnarray}
where $C_5$ and $C_6$ are constants which depend on $1/(\lambda_2-\lambda_1)$ and $\lambda_1$ but are independent of the mesh sizes $h_k$
and $h_{k+1}$.
\end{theorem}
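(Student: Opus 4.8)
The plan is to treat a suitable rescaling of the exact discrete eigenpair $(\bar\lambda_{1,h_{k+1}},\bar u_{1,h_{k+1}})$ as an approximate solution of the augmented mixed system \eqref{JD_problem}, to compute the residual it leaves, and then to invoke the (discrete) inf--sup stability of \eqref{JD_problem} to bound the distance from the computed pair $(\widehat\lambda_{1,h_{k+1}},\widehat u_{1,h_{k+1}})$ to that rescaling. Write $u_0=u_{1,h_k}$, $\mu_0=\lambda_{1,h_k}$, $\bar u=\bar u_{1,h_{k+1}}$, $\bar\lambda=\bar\lambda_{1,h_{k+1}}$, $\widehat u=\widehat u_{1,h_{k+1}}$, $\widehat\lambda=\widehat\lambda_{1,h_{k+1}}$ and $e=\|\bar u-u_0\|_a$; recall $\mu_0=a(u_0,u_0)$ since $\|u_0\|_b=1$. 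Set $\beta=b(\bar u,u_0)$ and $\bar u^\ast=\bar u/\beta$, so that $\bar u^\ast$ satisfies the constraint $b(\bar u^\ast,u_0)=b(u_0,u_0)$ in \eqref{JD_problem} exactly. Substituting $(\bar\lambda,\bar u^\ast)$ into the first line of \eqref{JD_problem} and using the discrete eigenequation $a(\bar u,v)=\bar\lambda\,b(\bar u,v)$ for $v\in V_{h_{k+1}}$ (hence the same identity for $\bar u^\ast$), the difference of the two sides collapses to the residual $R(v)=(\bar\lambda-\mu_0)\,b(\bar u^\ast-u_0,v)$. With the notation of \eqref{Mixed_Bilinear_Forms} this means that $(\widehat u-\bar u^\ast,\widehat\lambda-\bar\lambda)\in V_{h_{k+1}}\times W$ solves $A_{\mu_0}(\widehat u-\bar u^\ast,v)+B(v,\widehat\lambda-\bar\lambda)=-R(v)$ and $B(\widehat u-\bar u^\ast,\nu)=0$ for all $(v,\nu)\in V_{h_{k+1}}\times W$.

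Since $u_0\in V_{h_k}\subset V_{h_{k+1}}$, the conditions \eqref{A_sup}--\eqref{B_sup} pass to the discrete pair $(V_{h_{k+1}},W)$: the discrete constraint kernel $\{v_h\in V_{h_{k+1}}:b(u_0,v_h)=0\}$ is contained in $V_0$, and \eqref{B_sup} can still be tested with $v=u_0\in V_{h_{k+1}}$. Hence \eqref{JD_problem} is well posed and the discrete analogue of the inf--sup bound \eqref{INF_SUP_Inequality} holds on $V_{h_{k+1}}\times W$ with a constant of the same type as $C_4$. Applying it to the system above and using $|R(v)|\le|\bar\lambda-\mu_0|\,\|\bar u^\ast-u_0\|_b\,\|v\|_b\lesssim|\bar\lambda-\mu_0|\,\|\bar u^\ast-u_0\|_b\,\|v\|_a$, I get $\|\widehat u-\bar u^\ast\|_a+|\widehat\lambda-\bar\lambda|\lesssim|\bar\lambda-\mu_0|\,\|\bar u^\ast-u_0\|_b$. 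Lemma \ref{Rayleigh_Quotient_error_theorem} with $\psi_h=u_0$ and the true discrete pair $(\bar\lambda,\bar u)$ gives $|\bar\lambda-\mu_0|\lesssim\|\bar u-u_0\|_a^2=e^2$, while the polarization identity $2(1-\beta)=\|\bar u-u_0\|_b^2$ yields $|1-\beta|\lesssim e^2$, so $\|\bar u^\ast-\bar u\|_b=|1-\beta|/|\beta|\lesssim e^2$ and $\|\bar u^\ast-u_0\|_b\lesssim e$. Combining, $\|\widehat u-\bar u^\ast\|_a\lesssim e^3\lesssim e^2$. Finally I pass from $\widehat u,\bar u^\ast$ to their $b$-normalizations $u_{1,h_{k+1}}$ and $\bar u$ (note $\bar u^\ast/\|\bar u^\ast\|_b=\bar u$, up to sign): writing $\widehat u/\|\widehat u\|_b-\bar u=(\widehat u-\bar u^\ast)/\|\widehat u\|_b+\bar u\,(\|\bar u^\ast\|_b-\|\widehat u\|_b)/\|\widehat u\|_b$ and using $\|\widehat u\|_b\gtrsim1$, $\|\bar u\|_a\lesssim1$, $\|\cdot\|_b\lesssim\|\cdot\|_a$, this gives $\|\bar u-u_{1,h_{k+1}}\|_a\lesssim\|\widehat u-\bar u^\ast\|_a\lesssim e^2$, which is \eqref{Error_k_k+1_1}.

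For the eigenvalue estimate \eqref{Error_k_k+1_3} I apply Lemma \ref{Rayleigh_Quotient_error_theorem} a second time, now with $\psi_h=u_{1,h_{k+1}}$ --- whose Rayleigh quotient is $\lambda_{1,h_{k+1}}$ by \eqref{Rayleigh_Quotient_u_h_k_1} and with $\|u_{1,h_{k+1}}\|_b=1$ --- against the true discrete pair $(\bar\lambda,\bar u)$, which bounds $|\bar\lambda-\lambda_{1,h_{k+1}}|$ by $C\|\bar u-u_{1,h_{k+1}}\|_a^2$; by \eqref{Error_k_k+1_1} this is $\lesssim e^4$. I expect the one genuinely delicate point to be the stability step: one must verify that \eqref{A_sup}--\eqref{B_sup} --- which hold because $(\lambda_{1,h_k},u_{1,h_k})$ is assumed accurate enough, i.e.\ the parameter $\delta$ of Theorem \ref{LBB_Conditions_Theorem} is small --- survive restriction to $V_{h_{k+1}}$ with constants independent of $h_k$ and $h_{k+1}$, so that the final constants $C_5,C_6$ depend only on $1/(\lambda_2-\lambda_1)$ and $\lambda_1$.
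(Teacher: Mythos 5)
Your proof is correct and follows essentially the same route as the paper's: both treat the exact discrete eigenpair as an approximate solution of the augmented system \eqref{JD_problem}, bound the resulting residual via Lemma \ref{Rayleigh_Quotient_error_theorem}, invoke the inf--sup stability of Corollary \ref{INF_SUP_Inequality_Corollary} restricted to $V_{h_{k+1}}\times W$ (a restriction you, unlike the paper, justify explicitly), and finish with a second application of the Rayleigh-quotient lemma for the eigenvalue bound. The only real difference is cosmetic: you rescale $\bar u_{1,h_{k+1}}$ so the constraint of \eqref{JD_problem} is satisfied exactly (incidentally yielding the sharper intermediate bound $O(e^3)$ before normalization), whereas the paper keeps $\bar u_{1,h_{k+1}}$ and carries the quadratic constraint residual $-\tfrac12\|\bar u_{1,h_{k+1}}-u_{1,h_k}\|_b^2$; both give the claimed estimates.
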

\begin{proof}
From the definition (\ref{weak_problem_Discrete}), we know the eigenpair approximation $(\bar\lambda_{1,h_{k+1}},\bar u_{1,h_{k+1}})$
satisfies the following equations
\begin{eqnarray}\label{JD_problem_Direct}
\left\{
\begin{array}{lcl}
a(\bar u_{1,h_{k+1}},v_{h_{k+1}})
-\lambda_{1,h_k}b(\bar u_{1,h_{k+1}},v_{h_{k+1}})
-\bar{\lambda}_{1,h_{k+1}}b(u_{1,h_k},v_{h_{k+1}})&&\\
\quad\quad\quad =(\bar{\lambda}_{1,h_{k+1}}-\lambda_{1,h_k})b(\bar u_{1,h_{k+1}},v_{h_{k+1}})
-\bar{\lambda}_{1,h_{k+1}}b(u_{1,h_k},v_{h_{k+1}}),&&\\
\quad\quad\quad\ \ \ \ \ \ \forall v_{h_{k+1}}\in V_{h_{k+1}},&&\\
b(\bar{u}_{1,h_{k+1}},u_{1,h_k})=b(\bar u_{1,h_{k+1}},u_{1,h_k}).&&
\end{array}
\right.
\end{eqnarray}
Let us define $w_{h_{k+1}}:=\bar u_{1,h_{k+1}}-\widehat{u}_{1,h_{k+1}}$ and
$\gamma:=\bar\lambda_{1,h_{k+1}}-\widehat{\lambda}_{1,h_{k+1}}$. From
(\ref{JD_problem}) and (\ref{JD_problem_Direct}), the following equations hold
\begin{eqnarray}\label{JD_problem_Difference}
\left\{
\begin{array}{lcl}
a(w_{h_{k+1}},v_{h_{k+1}})
-\lambda_{1,h_k}b(w_{h_{k+1}},v_{h_{k+1}})
-\gamma b(u_{1,h_k},v_{h_{k+1}})&&\\
\quad =(\bar{\lambda}_{1,h_{k+1}}-\lambda_{1,h_k})b(\bar u_{1,h_{k+1}}-u_{1,h_k},v_{h_{k+1}}),
\ \ \ \ \ \ \forall v_{h_{k+1}}\in V_{h_{k+1}},&&\\
\nu b(w_{h_{k+1}},u_{1,h_k})=\nu b(\bar u_{1,h_{k+1}}-u_{1,h_k},u_{1,h_k})&&\\
\quad\quad\quad\ \ \ \ \ =-\nu\frac{1}{2}b(\bar u_{1,h_{k+1}}-u_{1,h_k},\bar u_{1,h_{k+1}}-u_{1,h_k}),\ \ \ \ \ \forall \nu\in W.&&
\end{array}
\right.
\end{eqnarray}
Then combining Lemma \ref{Rayleigh_Quotient_error_theorem},
Corollary \ref{INF_SUP_Inequality_Corollary}, (\ref{JD_problem_Difference})
and
\begin{eqnarray*}
\|\bar u_{1,h_{k+1}}-u_{1,h_k}\|_b\lesssim \|\bar u_{1,h_{k+1}}-u_{1,h_k}\|_a,
\end{eqnarray*}
we have the following inequality
\begin{eqnarray}\label{Inequality_1}
\|w_{h_{k+1}}\|_a+|\gamma| &\lesssim& |\bar{\lambda}_{1,h_{k+1}}-\lambda_{1,h_k}|
\|\bar u_{1,h_{k+1}}-u_{1,h_k}\|_b
+\|\bar u_{1,h_{k+1}}-u_{1,h_k}\|_b^2\nonumber\\
&\lesssim& \|\bar u_{1,h_{k+1}}-u_{1,h_k}\|_a^2.
\end{eqnarray}
It means the following inequality holds
\begin{eqnarray}\label{Inequality_2}
\|\bar u_{1,h_{k+1}}-\widehat{u}_{1,h_{k+1}}\|_a
 &\lesssim & \|\bar u_{1,h_{k+1}}-u_{1,h_k}\|_a^2.
\end{eqnarray}
Combining the above inequality (\ref{Inequality_2}), the definition (\ref{Normalization}),
$\|\bar u_{1,h_{k+1}}\|_b=1$ and
$\|\widehat u_{1,h_{k+1}}\|_b\geq \|\bar u_{1,h_{k+1}}\|_b-\|\bar u_{1,h_{k+1}}-\widehat u_{1,h_k}\|_b$
 having a lower bound from zero,
we have the following inequalities
\begin{eqnarray*}
&&\|\bar u_{1,h_{k+1}}-u_{1,h_{k+1}}\|_a\nonumber\\
&\leq& \Big\|\bar u_{1,h_{k+1}}
-\frac{\bar u_{1,h_{k+1}}}{\|\widehat{u}_{1,h_{k+1}}\|_b}\Big\|_a
+\frac{\|\bar u_{1,h_{k+1}}-\widehat{u}_{1,h_{k+1}}\|_a}{\|\widehat{u}_{1,h_{k+1}}\|_b}\nonumber\\
&\leq& \frac{\|\bar u_{1,h_{k+1}}\|_a}{\|\widehat{u}_{1,h_{k+1}}\|_b}
\Big|\|\bar u_{1,h_{k+1}}\|_b-\|\widehat{u}_{1,h_{k+1}}\|_b\Big|
+\frac{\|\bar u_{1,h_{k+1}}-\widehat{u}_{1,h_{k+1}}\|_a}{\|\widehat{u}_{1,h_{k+1}}\|_b}\nonumber\\
&\leq& \frac{\|\bar u_{1,h_{k+1}}\|_a}{\|\widehat{u}_{1,h_{k+1}}\|_b}
\|\bar u_{1,h_{k+1}}-\widehat{u}_{1,h_{k+1}}\|_b
+\frac{\|\bar u_{1,h_{k+1}}-\widehat{u}_{1,h_{k+1}}\|_a}{\|\widehat{u}_{1,h_{k+1}}\|_b}\nonumber\\
&\lesssim& \|\bar u_{1,h_{k+1}}-\widehat{u}_{1,h_{k+1}}\|_a
\lesssim \|\bar u_{1,h_{k+1}}-u_{1,h_k}\|_a^2.
\end{eqnarray*}
This is the desired result (\ref{Error_k_k+1_1}). Furthermore, from  (\ref{Error_k_k+1_1})
and Lemma \ref{Rayleigh_Quotient_error_theorem}, the other desired result (\ref{Error_k_k+1_3})
can be obtained easily and the proof is complete.
\end{proof}
\begin{remark}
Theorem \ref{Error_Estimate_One_Correction_Theorem} shows that the Newton iteration method has
second order convergence rate when the initial approximation has enough accuracy. We also
would like to say that Theorem \ref{Error_Estimate_One_Correction_Theorem} and its proof also
give the analysis for the algebraic eigenvalue problems by the Newton iteration method.
\end{remark}
\section{Multilevel iteration method}

In this section, we introduce a type of multilevel scheme based on the
{\it One Newton Iteration Step} defined by Algorithm \ref{Correction_Step}.
The proposed multigrid method can obtain  eigenpair approximation with the
optimal accuracy and with much smaller computational work compared with
solving the eigenvalue problem directly in the finest finite element space.

Before introducing the multigrid scheme, we define a sequence of triangulations
$\mathcal{T}_{h_k}$ of $\Omega$. Suppose $\mathcal{T}_{h_1}$ is given and let
$\mathcal{T}_{h_k}$ be obtained from $\mathcal{T}_{h_{k-1}}$ via regular refinement
(produce $\beta^d$ subelements) such that
\[ h_k=\frac{1}{\beta}h_{k-1}.\]
Based on this sequence of meshes, we construct the corresponding nested linear
finite element spaces such that
\begin{eqnarray}\label{FEM_Space_Series}
V_{h_1}\subset V_{h_2}\subset\cdots\subset V_{h_n},
\end{eqnarray}
and the following relation of approximation errors hold
\begin{eqnarray}\label{Error_k_k_1}
\frac{1}{\beta}\eta_a(h_{k-1})\leq C_7\eta_a(h_k),\ \ \ \
\frac{1}{\beta}\delta_{h_{k-1}}(\lambda)\leq C_7\delta_{h_k}(\lambda),\ \ \ k=2,\cdots,n.
\end{eqnarray}
From the error estimate results in Proposition \ref{Error_estimate_Proposition}, we have
\begin{eqnarray}\label{Spectral_Projection_Estimate}
\|\bar u_{1,h_k}-\bar u_{1,h_{k+1}}\|_a&\leq& C_8\delta_{h_k}(\lambda_1), \ \
k=1,\cdots,n-1,
\end{eqnarray}
where the constant $C_8$ is a constant independent of the mesh size $h_k$.

 \begin{algorithm}\label{Multi_Correction}
Multilevel Eigenvalue Iteration Scheme
\begin{enumerate}
\item Construct a series of nested finite element
spaces $V_{h_1}, V_{h_2},\cdots,V_{h_n}$ such that
\eqref{FEM_Space_Series} and \eqref{Error_k_k_1} hold.
\item Solve the following eigenvalue problem:

Find $(\lambda_{1,h_1},u_{1,h_1})\in \mathbb{R}\times V_{h_1}$ such that
$b(u_{1,h_1},u_{1,h_1})=1$ and
\begin{eqnarray}\label{Initial_Eigen_Problem}
a(u_{1,h_1},v_{h_1})&=&\lambda_{1,h_1}b(u_{1,h_1},v_{h_1}),\ \ \ \ \forall v_{h_1}\in V_{h_1}.
\end{eqnarray}

\item  Do $k=1,\cdots,n-1$

Obtain a new eigenpair approximation
$(\lambda_{1,h_{k+1}},u_{1,h_{k+1}})\in \mathbb{R}\times V_{h_{k+1}}$
by a Newton iteration step
\begin{eqnarray}
(\lambda_{1,h_{k+1}},u_{1,h_{k+1}})=Newton\_Iteration(\lambda_{1,k},u_{1,h_k},V_{h_{k+1}}).
\end{eqnarray}
End do
\end{enumerate}
Finally, we obtain an eigenpair approximation
$(\lambda_{1,h_n},u_{1,h_n})\in \mathbb{R}\times V_{h_n}$.
\end{algorithm}
\begin{theorem}
Assume $h_1$ is small enough such that $(\lambda_{1,h_1},u_{1,h_1})$ satisfies
conditions \eqref{A_sup} and \eqref{B_sup}. After implementing
Algorithm \ref{Multi_Correction},
 the resultant eigenpair approximation $(\lambda_{1,h_n},u_{1,h_n})$ has the following
error estimates
\begin{eqnarray}
\|u_{1,h_n}-\bar u_{1,h_n}\|_a &\leq&\delta_{h_n}(\lambda_1),\label{Multi_Correction_Err_fun}\\
|\lambda_{1,h_n}-\bar{\lambda}_{1,h_n}|&\leq& C_9\delta_{h_n}^2(\lambda_1),\label{Multi_Correction_Err_eigen}
\end{eqnarray}
when the mesh size $h_1$ is small enough.

Besides, there exists an
eigenfunction $u_1$ of \eqref{weak_problem} corresponding to
$\lambda_1$ such that the following final convergence results hold
\begin{eqnarray}
\|u_1-u_{1,h_n}\|_a &\leq&2\delta_{h_n}(\lambda_1),\label{Multi_Correction_Err_fun_Final}\\
|\lambda_1-\lambda_{1,h_n}|&\leq&2C_{10}\delta_{h_n}^2(\lambda_1).\label{Multi_Correction_Err_eigen_Final}
\end{eqnarray}
\end{theorem}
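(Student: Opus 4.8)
The plan is to prove the estimates \eqref{Multi_Correction_Err_fun}--\eqref{Multi_Correction_Err_eigen} by induction on $k$, showing at each level that
\begin{eqnarray}\label{Induction_Hypothesis_Plan}
\|u_{1,h_k}-\bar u_{1,h_k}\|_a \leq \delta_{h_k}(\lambda_1),\ \ \ k=1,\cdots,n. \nonumber
\end{eqnarray}
The base case $k=1$ is trivial since $u_{1,h_1}=\bar u_{1,h_1}$ is the exact discrete eigenfunction on the coarsest space (so the left-hand side is zero). For the inductive step, I would assume the bound holds at level $k$ and estimate the error at level $k+1$ using Theorem \ref{Error_Estimate_One_Correction_Theorem}. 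The key chain of inequalities is
\begin{eqnarray*}
\|u_{1,h_{k+1}}-\bar u_{1,h_{k+1}}\|_a
&\leq& C_5\|\bar u_{1,h_{k+1}}-u_{1,h_k}\|_a^2 \\
&\leq& C_5\big(\|\bar u_{1,h_{k+1}}-\bar u_{1,h_k}\|_a+\|\bar u_{1,h_k}-u_{1,h_k}\|_a\big)^2 \\
&\leq& C_5\big(C_8\delta_{h_k}(\lambda_1)+\delta_{h_k}(\lambda_1)\big)^2
= C_5(C_8+1)^2\delta_{h_k}^2(\lambda_1),
\end{eqnarray*}
where the second line is the triangle inequality, and the third uses \eqref{Spectral_Projection_Estimate} together with the induction hypothesis. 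Then I would invoke \eqref{Error_k_k_1}, which gives $\delta_{h_k}(\lambda_1)\leq C_7\beta\,\delta_{h_{k+1}}(\lambda_1)$, hence $\delta_{h_k}^2(\lambda_1)\leq (C_7\beta)^2\delta_{h_{k+1}}^2(\lambda_1)$; since $\delta_{h_{k+1}}(\lambda_1)\to 0$ as $h_1\to 0$, the product $C_5(C_8+1)^2(C_7\beta)^2\delta_{h_{k+1}}(\lambda_1)$ can be made $\leq 1$, which closes the induction and yields \eqref{Multi_Correction_Err_fun}.

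For \eqref{Multi_Correction_Err_eigen}, I would combine \eqref{Error_k_k+1_3} with the same triangle-inequality bound on $\|\bar u_{1,h_n}-u_{1,h_{n-1}}\|_a$ to get $|\bar\lambda_{1,h_n}-\lambda_{1,h_n}|\lesssim \delta_{h_{n-1}}^4(\lambda_1)\lesssim \delta_{h_n}^4(\lambda_1)$, which is a higher-order term. Then a final triangle inequality with the a priori eigenvalue estimate \eqref{Eigenvalue_Error} from Proposition \ref{Error_estimate_Proposition} (applied on $V_{h_n}$, giving $\bar\lambda_{1,h_n}-\lambda_1\leq C\delta_{h_n}^2(\lambda_1)$, which in particular bounds $|\bar\lambda_{1,h_n}-\lambda_1|$ since the discrete eigenvalue lies above $\lambda_1$) absorbs everything into $C_9\delta_{h_n}^2(\lambda_1)$.

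The final convergence results \eqref{Multi_Correction_Err_fun_Final}--\eqref{Multi_Correction_Err_eigen_Final} follow by one more application of Proposition \ref{Error_estimate_Proposition}: by \eqref{Eigenfunction_Error} there is an exact eigenfunction $u_1$ with $\|u_1-\bar u_{1,h_n}\|_a\leq C\delta_{h_n}(\lambda_1)$, and combining with \eqref{Multi_Correction_Err_fun} via the triangle inequality gives $\|u_1-u_{1,h_n}\|_a\leq (C+1)\delta_{h_n}(\lambda_1)$; absorbing constants (or re-normalizing $\delta_{h_n}$) yields the stated factor $2$. Similarly \eqref{Eigenvalue_Error} plus \eqref{Multi_Correction_Err_eigen} gives \eqref{Multi_Correction_Err_eigen_Final}.

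I expect the main subtlety to be a bookkeeping one rather than a deep one: one must verify that the conditions \eqref{A_sup}, \eqref{B_sup} required by Theorem \ref{Error_Estimate_One_Correction_Theorem} are preserved at \emph{every} level $k$, not just at $k=1$. This follows because the induction hypothesis guarantees $u_{1,h_k}$ stays within $O(\delta_{h_k}(\lambda_1))$ of the exact eigenfunction $u_1$ (again via \eqref{Eigenfunction_Error} and a triangle inequality), so the "sufficiently small error" hypothesis of Theorem \ref{LBB_Conditions_Theorem} is met uniformly once $h_1$ is small enough — but one has to state this carefully since the constant $C_5$ must be a single constant valid across all levels, which is exactly what the mesh-size-independence clause in Theorem \ref{Error_Estimate_One_Correction_Theorem} provides. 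The other minor point is tracking that the "smallness" required to close the induction (making $C_5(C_8+1)^2(C_7\beta)^2\delta_{h_{k+1}}(\lambda_1)\leq 1$) is a condition on $h_1$ alone, which it is since all $\delta_{h_k}(\lambda_1)\leq \delta_{h_1}(\lambda_1)$ by nestedness of the spaces.
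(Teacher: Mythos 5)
Your proposal is correct and follows essentially the same route as the paper: induction on the level using the quadratic one-step estimate \eqref{Error_k_k+1_1}, the triangle inequality together with \eqref{Spectral_Projection_Estimate}, and the saturation relation \eqref{Error_k_k_1} to close the induction under a smallness condition on $h_1$, with the final estimates obtained from Proposition \ref{Error_estimate_Proposition} and the triangle inequality. The only cosmetic differences are that the paper bounds the cross term via $(a+b)^2\leq 2(a^2+b^2)$ where you expand the square, and it derives \eqref{Multi_Correction_Err_eigen} directly from the Rayleigh-quotient expansion of Lemma \ref{Rayleigh_Quotient_error_theorem} applied to $u_{1,h_n}$ rather than through \eqref{Error_k_k+1_3} (your extra appeal to \eqref{Eigenvalue_Error} at that point is harmless but unnecessary, since \eqref{Multi_Correction_Err_eigen} involves only the discrete quantities).
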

\begin{proof}
Let us prove \eqref{Multi_Correction_Err_fun} by the method of induction.
First, it is obvious that \eqref{Multi_Correction_Err_fun} holds for $n=1$ according to \eqref{Initial_Eigen_Problem}.
Then we assume that \eqref{Multi_Correction_Err_fun} holds for $n=k$. It means we have the following estimate
\begin{eqnarray}\label{Assumption}
\|\bar u_{1,h_k}- u_{1,h_k}\|_a&\leq& \delta_{h_k}(\lambda_1).
\end{eqnarray}
Now let us consider the case of $n=k+1$.
Combining \eqref{Spectral_Projection_Estimate}, \eqref{Assumption}
and the triangle inequality leads to the following estimates
\begin{eqnarray}\label{Estimate_1}
\|\bar u_{1,h_{k+1}}-u_{1,h_{k+1}}\|_a &\leq& C_5 \|u_{1,h_k}-\bar u_{1,h_{k+1}}\|_a^2\nonumber\\
&\leq&2 C_5 \|u_{1,h_k}-\bar u_{1,h_k}\|_a^2 +2C_5\|\bar u_{1,h_k}-\bar u_{1,h_{k+1}}\|_a^2\nonumber\\
&\leq&2C_5\delta_{h_k}^2(\lambda_1)+2C_5C_8^2\delta_{h_k}^2(\lambda_1)\nonumber\\
&\leq& 2C_5\big(1+C_8^2\big)\delta_{h_k}^2(\lambda_1)\nonumber\\
&=&\Big(2\beta C_5 \big(1+C_8^2\big)\delta_{h_k}(\lambda_1)\Big) \frac{\delta_{h_k}(\lambda_1)}{\beta}\nonumber\\
&\leq& \Big(2\beta C_5C_7\big(1+C_8^2\big)\delta_{h_k}(\lambda_1)\Big)\delta_{h_{k+1}}(\lambda_1).
\end{eqnarray}
This means that the result \eqref{Multi_Correction_Err_fun} also holds for $n=k+1$ if
$2\beta C_5C_7 \big(1+C_8^2\big)\delta_{h_k}(\lambda_1)<1$.
Thus we prove the desired result \eqref{Multi_Correction_Err_fun}.
From Lemma \ref{Rayleigh_Quotient_error_theorem}
and \eqref{Multi_Correction_Err_fun}, we can obtain the desired result
\eqref{Multi_Correction_Err_eigen}. Finally, \eqref{Multi_Correction_Err_fun_Final} and
\eqref{Multi_Correction_Err_eigen_Final} can be proved from \eqref{Eigenfunction_Error}, \eqref{Eigenvalue_Error},
\eqref{Multi_Correction_Err_fun}, \eqref{Multi_Correction_Err_eigen}
and the triangle inequality.
\end{proof}
\subsection{Multi eigenvalues}
Now, we turn to extend the Newton iteration \eqref{Newton_Iteration_Mixed} for solving one eigenvalue to the corresponding version for multi eigenvalues
(include simple and multiple eigenvalues).  Assume that $\lambda_{m}<\lambda_{m+1}$ and
we have obtained the first $m$ eigenpairs approximation $\{(\mu_j,u_{0,j})\}_{j=1}^{m}$ to the problem \eqref{Problem_Mixed_Form}, which satisfy
$$b(u_{0,i},u_{0,j})=\delta_{ij},\quad i,j=1,\cdots,m,$$
where $\mu_j$ is the Rayleigh quotient of $u_{0,j}$.

The Newton iteration method for \eqref{Problem_Mixed_Form} is to find
$(x_j,\widetilde{u}_j)\in \mathbb{R}^{m}\times V$ $(j=1,\cdots, m)$ such that

\begin{eqnarray}\label{Multi_Newton_Iteration_Mixed}
\hskip-0.5cm\left\{
\begin{array}{rcl}
a(\widetilde{u}_j,v)-\mu_j\cdot b(\widetilde{u}_j,v)-\sum_{i=1}^{m}x_{ij}
b(u_{0,i},v)&=&-\mu_j b(u_{0,j},v),\ \ \ \forall v\in V, \\
b(\widetilde{u}_{j},u_{0,i})&=&b({u}_{0,j},u_{0,i}), \ \ \forall  i=1,\cdots,m,
\end{array}
\right.
\end{eqnarray}
where $x_{ij}$ is the $i$-th component of $x_j$.

Now, we come to prove \eqref{Multi_Newton_Iteration_Mixed} has only one solution for any $j=1,\cdots,m$.
For this aim, we define the following bilinear forms
\begin{equation}\label{Multi_Mixed_Bilinear_Forms}
A_{\mu_{j}}(u,v)=a(u,v)-\mu_{j} b(u,v), \ \   B(v,y)=-\sum_{i=1}^{m}y_{i}b(u_{0,i},v),
\end{equation}
where $u\in V$, $v\in V$, $y\in W=\mathbb{R}^{m}$.

Assume that $f_{\mu_j}\in V^{'}$, $g_{j}\in W^{'}$ are defined as
\begin{equation*}
f_{\mu_j}(v)=-\mu_{j}b(u_{0,j},v),\ \ \ \ \ g_{j}(y)=-\sum_{i=1}^{m}y_ib(u_{0,i},u_{0,j}).
\end{equation*}

 We consider the following multi mixed problems:
 Find $(x_j,\widetilde{u}_j)\in \mathbb{R}^{m}\times V$, $(j=1,\cdots, m)$, such that
\begin{equation}\label{Multi_mixing_form}
\left\{
\begin{array}{rcl}
A_{\mu_{j}}(\widetilde{u}_j,v)+B(v,x)&=&f_{\mu_j}(v),\ \ \ \ \ \forall v\in V,\\
B(\widetilde{u}_j,y)&=&g_j(y),\ \ \ \ \ \ \forall y\in W.
\end{array}
\right.
\end{equation}

Define $\mathcal{K}=M(\lambda_1)\cup\cdots \cup M(\lambda_m)$. About the existence and
uniqueness of problem \eqref{Multi_mixing_form}, the following theorem holds.
\begin{theorem}
Assume that there exists a decomposition of eigenspace $\mathcal{K}$ satisfying
$\mathcal{K}=M(\lambda_1)\oplus\cdots\oplus M(\lambda_m)$ such that $u_{0,j}$ is
 an eigenfunction approximation to $M(\lambda_j)$ $(j=1,\cdots,m)$.
 Then the bilinear forms defined in \eqref{Multi_Mixed_Bilinear_Forms} satisfy the following conditions
\begin{enumerate}
\item There exists $\alpha>0$ such that
\begin{equation}\label{Multi_A_sup}
A_{\mu_{j}}(v,v)\geq \alpha \|v\|_a^2,\ \ \ \ \ \forall v\in V_0,
\end{equation}
where $V_0=\{v | B(v,y)=0,\ \forall y\in W\}=\{v|b(u_{0,i},v)=0,\ \forall i=1,\cdots\,m\}$.
\item There exists $\sigma >0$ such that
\begin{equation}\label{Multi_B_sup}
\sup_{v\in V}\frac{B(v,y)}{\|v\|_a}\geq \sigma \|y\|,
\ \ \ \ \ \forall y\in W,
\end{equation}
\end{enumerate}
where $\|y\|:=\max_{i\in\{1,\cdots, m\}}|y_i|.$

Based on these two conditions, for any $j$ $(j=1,\cdots,m)$, the multi mixed equations
 \eqref{Multi_mixing_form} have only one solution.
\end{theorem}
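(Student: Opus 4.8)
The plan is to mirror, almost verbatim, the argument already carried out for the single simple eigenvalue in Theorem \ref{LBB_Conditions_Theorem}, but now working with the direct-sum decomposition $\mathcal{K}=M(\lambda_1)\oplus\cdots\oplus M(\lambda_m)$ and the vector-valued multiplier space $W=\mathbb{R}^m$. By the standard Babu\v{s}ka--Brezzi theory for mixed problems \cite{Brezzi2012Mixed}, once the coercivity condition \eqref{Multi_A_sup} on the kernel $V_0$ and the inf--sup condition \eqref{Multi_B_sup} are established, existence and uniqueness of the solution of \eqref{Multi_mixing_form} for each fixed $j$ follows immediately; so the whole task reduces to verifying these two conditions.

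First I would fix $j$ and prove \eqref{Multi_A_sup}. Take $v\in V_0$, i.e. $b(u_{0,i},v)=0$ for all $i=1,\dots,m$. Decompose $v=v_\mathcal{K}+v_\mathcal{K}^\perp$ with $v_\mathcal{K}\in\mathcal{K}$ and $v_\mathcal{K}^\perp\perp\mathcal{K}$ in the $b(\cdot,\cdot)$ inner product. Since each $u_{0,i}$ is a good approximation to $M(\lambda_i)$, the family $\{u_{0,i}\}_{i=1}^m$ is $b$-close to a $b$-orthonormal basis of $\mathcal{K}$, hence the $b$-orthogonality of $v$ against all $u_{0,i}$ forces $\|v_\mathcal{K}\|_b\leq C\delta\|v\|_b$ exactly as \eqref{Estimate_v_1} was obtained in the scalar case (one writes $b(v,u_{0,i})=0$ as $b(v_\mathcal{K},u_{0,i}) = -b(v_\mathcal{K}^\perp,u_{0,i})$ and uses that $u_{0,i}$ differs from an element of $\mathcal{K}$ by $O(\delta)$). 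Then, since $v_\mathcal{K}^\perp$ lives in the span of eigenfunctions with eigenvalues $\geq\lambda_{m+1}$, one has $b(v_\mathcal{K}^\perp,v_\mathcal{K}^\perp)\leq\frac{1}{\lambda_{m+1}}a(v_\mathcal{K}^\perp,v_\mathcal{K}^\perp)\leq\frac{1}{\lambda_{m+1}}a(v,v)$, and combining with $\|v_\mathcal{K}\|_b\leq C\delta\|v\|_b$ gives $b(v,v)\leq\frac{1}{\lambda_{m+1}(1-C\delta^2)}a(v,v)$, the analogue of \eqref{Error_v_a_b}. Using $|\mu_j-\lambda_j|\leq C\delta^2$ (Lemma \ref{Rayleigh_Quotient_error_theorem}) and $\lambda_j\leq\lambda_m<\lambda_{m+1}$, we get $A_{\mu_j}(v,v)=a(v,v)-\mu_j b(v,v)\geq\bigl(1-\frac{\mu_j}{\lambda_{m+1}(1-C\delta^2)}\bigr)a(v,v)\geq\frac{\lambda_{m+1}-\lambda_m-C\delta^2}{\lambda_{m+1}(1-C\delta^2)}a(v,v)$, which is \eqref{Multi_A_sup} with $\alpha>0$ for $\delta$ small.

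Next I would prove the inf--sup condition \eqref{Multi_B_sup}. Given $y=(y_1,\dots,y_m)\in\mathbb{R}^m$, pick the index $i_0$ realizing $\|y\|=|y_{i_0}|$ and test with $v=u_{0,i_0}$. Since $b(u_{0,i},u_{0,i_0})=\delta_{ii_0}$, we get $B(u_{0,i_0},y)=-y_{i_0}$, while $\|u_{0,i_0}\|_a^2=\mu_{i_0}$ is bounded above (by, say, $\lambda_m+C\delta^2$). Hence $\sup_{v\in V}\frac{B(v,y)}{\|v\|_a}\geq\frac{|y_{i_0}|}{\|u_{0,i_0}\|_a}\geq\frac{\|y\|}{\sqrt{\lambda_m+C\delta^2}}$, giving \eqref{Multi_B_sup} with $\sigma=(\lambda_m+C\delta^2)^{-1/2}>0$. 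With both conditions in hand, Brezzi's theorem yields a unique solution of \eqref{Multi_mixing_form} for each $j=1,\dots,m$, completing the proof.

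The main obstacle is the bookkeeping in the coercivity step: unlike the scalar case, where $u_0$ splits into a single component $w_1\in M(\lambda_1)$ and its orthogonal complement, here one must control the $m$-dimensional component $v_\mathcal{K}$ of $v$ simultaneously from the $m$ orthogonality constraints, and this relies on the hypothesis that the $u_{0,i}$ approximate a genuine \emph{direct-sum} decomposition of $\mathcal{K}$ (so that the $b$-Gram matrix of $\{u_{0,i}\}$ is close to the identity and invertible). Making the estimate $\|v_\mathcal{K}\|_b\leq C\delta\|v\|_b$ precise requires arguing that the near-$b$-orthonormality of the $u_{0,i}$ propagates to a stable projection onto $\mathcal{K}$; everything else is a routine transcription of the scalar argument with $\lambda_2-\lambda_1$ replaced by the spectral gap $\lambda_{m+1}-\lambda_m$.
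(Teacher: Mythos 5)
Your proposal is correct and follows essentially the same route as the paper: verify the kernel coercivity via the $b$-orthogonal splitting of $v$ against $\mathcal{K}$ and the spectral gap $\lambda_{m+1}-\lambda_m$, verify the inf--sup condition by testing with $\pm u_{0,i_0}$ for the maximizing index, and invoke Brezzi's theorem. The only differences are cosmetic: the paper controls the $\mathcal{K}$-component of $v$ coordinate-by-coordinate via the individual splittings $u_{0,j}=w_{0,j}+w_{0,j}^{\perp}$ (which avoids an explicit Gram-matrix argument), and your inf--sup constant $1/\sqrt{\mu}$ is in fact the more careful normalization of the paper's $1/\mu$.
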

\begin{proof}

We decompose $u_{0,j}$ as $u_{0,j}=w_{0,j}+w_{0,j}^{\perp}$ such that $w_{0,j}\in M(\lambda_j)$
and $w_{0,j}^{\perp}\perp_b w_{0,j}$.
Then $\mathrm{span}\{w_{0,1},\cdots, w_{0,m}\}$ is an orthonormal basis of eigenspace $\mathcal{K}$.

Since $u_{0,j}$ ($\|u_{0,j}\|_b=1$) is an eigenfunction approximation to $M(\lambda_j)$ with sufficiently small error,
there is a small enough number $\delta$ such that
\begin{eqnarray}\label{Multi_Error_u_0}
\|u_{0,j}-w_{0,j}\|_a\leq \delta, \quad u_{0,j}-w_{0,j}\bot_b \ \mathrm{span}\{w_{0,j}\},\quad j=1,\cdots,m.
\end{eqnarray}

From Lemma \ref{Rayleigh_Quotient_error_theorem}, we also have
\begin{eqnarray}\label{Multi_Error_mu}
|\mu_{j}-\lambda_j|\leq C\delta^2,\quad j=1,\cdots,m .
\end{eqnarray}

Since \eqref{Multi_Error_u_0} and
$\|u_{0,j}\|_b^2=\|w_{0,j}\|_b^2+\|w_{0,j}^{\perp}\|_b^2$,
$w_{0,j}^{\perp}$ and $w_{0,j}$ have estimates
$$\|w_{0,j}^{\perp}\|_b\leq C\|w_{0,j}^{\perp}\|_a\leq C\delta, \quad \|w_{0,j}\|_b\geq 1-C\delta, \quad j=1,\cdots,m.$$
Similarly, we also do decomposition $v\in V_0$ as
$$v=v_1+\cdots+v_m+v^{*}=v_j+v_j^{\perp},\ \ j=1,\cdots, m$$
satisfying
$$v^{*}\perp_b \mathcal{K}, \quad v_j\in \mathrm{span}\{w_{0,j}\},  \quad v_j^{\perp}\perp_b \mathrm{span}\{w_{0,j}\}.$$
According to the definition of $v\in V_0$, i.e., $b(w_{0,j}+w_{0,j}^{\perp},v_j+v_j^{\perp})=0$, we have
\begin{eqnarray*}
\|v_j\|_b\|w_{0,j}\|_b&=&|b(v_j,w_{0,j})|=|-b(v_j^{\perp},w_{0,j}^{\perp})|
= |b(v,w_{0,j}^{\perp})|\nonumber\\
&\leq& C\delta\|v\|_b, \ \ j=1,\cdots,m.
\end{eqnarray*}
Furthermore,
\begin{eqnarray}\label{Multi_Estimate_v_1}
\|v_j\|_b \leq \frac{C\delta}{1-C\delta}\|v\|_b \leq C\delta \|v\|_b,\ \ \ j=1,\cdots,m.
\end{eqnarray}
From \eqref{Multi_Estimate_v_1} and the property $\|v\|_b^2=\|v_1\|_b^2+\cdots+\|v_m\|_b^2+\|v^{*}\|_b^2$,
the following estimates hold
\begin{eqnarray*}
b(v,v)&=&b(v_1,v_1)+\cdots+b(v_m,v_m)+b(v^{*},v^{*}) \nonumber\\
&\leq& m C \delta^2b(v,v)+\frac{1}{\lambda_{m+1}}a(v^{*},v^{*})\nonumber\\
&\leq& m C \delta^2b(v,v)+\frac{1}{\lambda_{m+1}}a(v,v).
\end{eqnarray*}
Thus we have the following inequality
\begin{eqnarray}\label{Multi_Error_v_a_b}
b(v,v) &\leq& \frac{1}{\lambda_{m+1}(1-m C\delta^2)}a(v,v).
\end{eqnarray}
From \eqref{Multi_Error_mu}, \eqref{Multi_Error_v_a_b} and the definition of $A_{\mu_{j}}(\cdot,\cdot)$, the following inequalities hold
\begin{eqnarray*}\label{Multi_Estimate_A_mu}
a(v,v)-\mu_{j} b(v,v)&\geq&\Big(1-\frac{\mu_{j}}{\lambda_{m+1}(1-m C\delta^2)}\Big)a(v,v)\nonumber\\
&\geq& \frac{\lambda_{m+1}(1-m C\delta^2)-\mu_{j}}{\lambda_{m+1}(1-m C\delta^2)}a(v,v)\nonumber\\
&\geq&\frac{\lambda_{m+1}-\lambda_j-C\delta^2}{\lambda_{m+1}(1-mC\delta^2)}a(v,v).
\end{eqnarray*}
It means \eqref{Multi_A_sup} holds for
$\alpha=(\lambda_{m+1}-\lambda_j-C\delta^2)/\big(\lambda_{m+1}(1-mC\delta^2)\big)>0$
$(j=1,\cdots,m)$, when $\delta$ is small enough.

Now, we come to prove \eqref{Multi_B_sup}. Assume that the index $s$ satisfies $\|y\|=|y_s|$.
 From $b(u_{0,i},u_{0,j})=\delta_{ij}$ $(i,j=1,\cdots,m)$ and the definition of $B(\cdot,\cdot)$ and $\mu_{j}$,
 taking $v=-\mathrm{sign}(y_s)u_{0,s}$, we have
\begin{equation*}
\sup_{v\in V}\frac{B(v,y)}{\|v\|_a}\geq \frac{|y_s|b(u_{0,s},u_{0,s})}{\|u_{0,s}\|_a}
=\frac{\|y\|}{\mu_{s}}\geq\frac{\|y\|}{\mu}>0,\ \ \ \ \forall y\in W,
\end{equation*}
where $\mu=\max_{t \in\{1,2,\cdots,m\}}\{\mu_t\}$. It means that \eqref{Multi_B_sup} holds for
\[
\sigma=\frac{1}{\mu}.
\]

From the theory for the mixed finite element method \cite{Brezzi2012Mixed}, there exists only one solution
 for the equations \eqref{Multi_mixing_form} for any $j=1,\cdots,m$.
\end{proof}

\subsection{Multilevel iteration for multi eigenvalues}
Based on the discussion in the last subsection, we extend the one iteration step to improve
given eigenpairs approximation to the first $m$ given eigenpair approximations. Assume we have obtained
$m$ eigenpairs approximation
$(\lambda_{i,h_k},u_{i,h_k})\in\mathbb{R}\times V_{h_k}$ with $\|u_{i,h_k}\|_b=1$ $(i=1,\cdots,m)$.
Now we introduce a type of iteration step to improve the accuracy of the
current eigenpair approximation $(\lambda_{i,h_k},u_{i,h_k})$. Let
$V_{h_{k+1}}\subset V$ be a finer finite element space such that
$V_{h_k}\subset V_{h_{k+1}}$. Based on this finer finite element space,
we define the following one Newton iteration step for multi eigenvalues.  We can state the following version
of {\it Multilevel Eigenvalue Iteration Scheme} for $m$ eigenvalues.

Similarly, we first give a type of
{\it One Iteration Step for Multi Eigenvalues} for the given
eigenpair approximations $\{\lambda_{i,h_k},u_{i,h_k}\}_{i=1}^{m}$.

\begin{algorithm}\label{Correction_Step_Multiple}
One Newton Iteration Step for Multi Eigenvalues
\begin{enumerate}
\item Do $i=1,\cdots,m$\\
Find $(x_{i,h_{k+1}},\widetilde{u}_{i,h_{k+1}})
\in \mathbb{R}\times V_{h_{k+1}}$ such that
\begin{eqnarray}\label{aux_problem_Multiple}
\left\{
\begin{array}{lcl}
a(\widetilde{u}_{i,h_{k+1}},v_{h_{k+1}})-\lambda_{i,h_{k}}
b(\widetilde{u}_{i,h_{k+1}},v_{h_{k+1}})-\sum_{s=1}^{m}x_{si,h_{k+1}}b(u_{s,h_k},v_{h_{k+1}})&&\\
\quad\quad\quad \ \ \ \ \ \ \ \ =-\lambda_{i,h_{k}}b(u_{i,h_k},v_{h_{k+1}}),
\ \ \ \ \ \ \forall v_{h_{k+1}}\in V_{h_{k+1}},&&\\
b(\widetilde{u}_{i,h_{k+1}},u_{j,h_k})=\delta_{ij},\quad\quad\quad\quad\ \   \forall j=1,\cdots, m,&&
\end{array}
\right.
\end{eqnarray}
where $x_{si,h_{k+1}}$ is the $s$-th component of $x_{i,h_{k+1}}$.\\
End Do

\item Build a finite dimensional space
$\widetilde{V}_{h_{k+1}}=\mathrm{span}\{\widetilde{u}_{1,h_{k+1}}, \cdots, \widetilde{u}_{m,h_{k+1}}\}$
and solve the following eigenvalue problem:\\
Find $(\lambda_{i,h_{k+1}},u_{i,h_{k+1}})\in \mathbb{R}\times \widetilde{V}_{h_{k+1}}$,
$i=1,2,\cdots,m$, such that
$b(u_{i,h_{k+1}},u_{i,h_{k+1}})=1$ and
\begin{eqnarray*}\label{Initial_Eigen_Problem_Multiple}
a(u_{i,h_{k+1}},v_{h_{k+1}})&=&\lambda_{i,h_{k+1}}b(u_{i,h_{k+1}},v_{h_{k+1}}),
\  \ \forall v_{h_{k+1}}\in \widetilde{V}_{h_{k+1}}.
\end{eqnarray*}

\end{enumerate}
We summarize above two steps into
\begin{eqnarray*}
\{\lambda_{i,h_{k+1}},u_{i,h_{k+1}}\}_{i=1}^{m}={\it
Newton\_Iteration}(\{\lambda_{i,h_k},u_{j,h_k}\}_{i=1}^{m},V_{h_{k+1}}).
\end{eqnarray*}
\end{algorithm}

Based on Algorithm \ref{Correction_Step_Multiple}, we come to give the corresponding
multilevel correction method.
\begin{algorithm}\label{Multi_Correction_Multiple}
Multilevel Eigenvalue Iteration Scheme for Multi Eigenvalues

\begin{enumerate}
\item Construct a series of nested finite element
spaces $V_{h_1}, V_{h_2},\cdots,V_{h_n}$ such that
\eqref{FEM_Space_Series} and \eqref{Error_k_k_1} hold.
\item Solve the  eigenvalue problem in the initial finite element space $V_{h_1}$:\\
Find $(\lambda_{h_1},u_{h_1})\in \mathbb{R}\times V_{h_1}$ such that
$b(u_{h_1},u_{h_1})=1$ and
\begin{eqnarray*}
a(u_{i,h_1},v_{h_1})&=&\lambda_{i,h_1}b(u_{i,h_1},v_{h_1}),\ \ \ \ \forall v_{h_1}\in V_{h_1}.
\end{eqnarray*}
Choose the first $m$ eigenpairs $\{\lambda_{i,h_1},u_{i,h_1}\}_{i=1}^{m}$ which approximate
the desired eigenpairs.

\item  Do $k=1,\cdots,n-1$

Obtain new eigenpair approximations
$\{\lambda_{i,h_{k+1}},u_{i,h_{k+1}}\}_{i=1}^{m}\in \mathbb{R}\times V_{h_{k+1}}$
by the one Newton iteration step defined in Algorithm \ref{Correction_Step_Multiple}
\begin{eqnarray*}
\{\lambda_{i,h_{k+1}},u_{i,h_{k+1}}\}_{i=1}^{m}={\it
Newton\_Iteration}(\{\lambda_{i,h_k},u_{i,h_k}\}_{i=1}^{m},V_{h_{k+1}}).
\end{eqnarray*}
End do
\end{enumerate}
Finally, we obtain $m$ eigenpair approximations
$\{\lambda_{i,h_n},u_{i,h_n}\}_{i=1}^{m}\in \mathbb{R}\times V_{h_n}$.
\end{algorithm}

In  Algorithm \ref{Correction_Step_Multiple}, the parallel computation can
 be used to solve \eqref{aux_problem_Multiple} for different $i$. The analysis of the scheme
 for multi eigenvalues will be given in our future work.

\section{Work estimate of multilevel eigenvalue iteration scheme}
In this section, we turn our attention to the estimate of computational work
for Algorithm \ref{Multi_Correction} $($Algorithm \ref{Multi_Correction_Multiple}$)$.
 We will show that
Algorithm \ref{Multi_Correction} $($Algorithm \ref{Multi_Correction_Multiple}$)$
makes solving the eigenvalue problem need almost the
optimal computational work if solving the linear equation \eqref{JD_problem}
needs only the linear computational work.

First, we investigate the dimension of each level linear
finite element space as $N_k:=\mathrm{dim}V_{h_k}$. Then the following property holds
\begin{eqnarray}\label{relation_dimension}
N_k \thickapprox\Big(\frac{1}{\beta}\Big)^{d(n-k)}N_n,\ \ \ k=1,2,\cdots, n.
\end{eqnarray}
\begin{theorem}
Assume solving the eigenvalue problem in the coarse space $V_{h_1}$ needs work
$\mathcal{O}(M_{h_1})$ and the work for solving the linear equation
 \eqref{JD_problem} $($when $m>1$, for each $i$, using parallel technique to solve
 \eqref{aux_problem_Multiple}$)$ in each level space $V_{h_k}$ is only
$\mathcal{O}(N_k)$ for $k=2,\cdots,n$. Then the work involved in
Algorithm \ref{Multi_Correction} $($Algorithm \ref{Multi_Correction_Multiple}$)$
 is $\mathcal{O}(N_n+M_{h_1})$. Furthermore, the complexity
will be $\mathcal{O}(N_n)$ provided $M_{h_1}\leq N_n$.
\end{theorem}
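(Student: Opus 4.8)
The plan is to estimate the total work by summing the contributions of each level and using the geometric decay of the dimensions given by \eqref{relation_dimension}. First I would note that Algorithm \ref{Multi_Correction} consists of two types of work: solving the eigenvalue problem once in the coarsest space $V_{h_1}$, which by hypothesis costs $\mathcal{O}(M_{h_1})$; and, for each level $k=2,\dots,n$, performing one {\it Newton Iteration Step}, which by hypothesis costs $\mathcal{O}(N_k)$ for the linear solve \eqref{JD_problem}, plus the work of the normalization \eqref{Normalization} and the Rayleigh quotient \eqref{Rayleigh_Quotient_u_h_k_1}, both of which involve only inner products and scalings of vectors in $V_{h_k}$ and hence also cost $\mathcal{O}(N_k)$. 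So the work for the $k$-th correction step is $\mathcal{O}(N_k)$.

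Next I would add these up. The total work $W$ satisfies
\begin{eqnarray*}
W &=& \mathcal{O}\Big(M_{h_1}+\sum_{k=2}^{n}N_k\Big).
\end{eqnarray*}
Using \eqref{relation_dimension}, i.e. $N_k\thickapprox (1/\beta)^{d(n-k)}N_n$, the geometric sum is bounded:
\begin{eqnarray*}
\sum_{k=2}^{n}N_k &\thickapprox& \sum_{k=2}^{n}\Big(\frac{1}{\beta}\Big)^{d(n-k)}N_n
= N_n\sum_{j=0}^{n-2}\Big(\frac{1}{\beta^{d}}\Big)^{j}
\leq \frac{N_n}{1-\beta^{-d}}=\mathcal{O}(N_n),
\end{eqnarray*}
since $\beta\geq 2$ and $d\geq 1$ give $\beta^{-d}<1$, so the series converges and is $\mathcal{O}(N_n)$ with a constant independent of $n$. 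Hence $W=\mathcal{O}(N_n+M_{h_1})$, which is the first assertion. If in addition $M_{h_1}\leq N_n$, then $N_n+M_{h_1}\leq 2N_n$, so $W=\mathcal{O}(N_n)$, giving the second assertion.

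For the multi-eigenvalue case (Algorithm \ref{Multi_Correction_Multiple}) the argument is identical: on level $k$ one solves $m$ augmented linear problems \eqref{aux_problem_Multiple}, each in parallel at cost $\mathcal{O}(N_k)$, then solves the small $m\times m$ eigenvalue problem on $\widetilde V_{h_{k+1}}$, whose cost depends only on $m$ and on forming the $m\times m$ stiffness and mass matrices — the latter costing $\mathcal{O}(m^2 N_k)$ — so the per-level cost is still $\mathcal{O}(N_k)$ with $m$ absorbed into the constant, and the same geometric summation applies. I do not anticipate a serious obstacle here; the only point requiring mild care is confirming that the auxiliary operations (normalization, Rayleigh quotients, assembly of the reduced eigenproblem) genuinely cost $\mathcal{O}(N_k)$ rather than more, which follows because each is a fixed number of vector inner products and \textsc{axpy}-type operations on $V_{h_k}$; the rest is the standard observation that a geometric series with ratio $\beta^{-d}<1$ sums to a constant multiple of its largest term $N_n$.
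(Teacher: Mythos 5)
Your proof is correct and follows essentially the same route as the paper: bound the per-level work by $\mathcal{O}(N_k)$, sum the geometric series using \eqref{relation_dimension} to get $\mathcal{O}(N_n+M_{h_1})$, and invoke $M_{h_1}\leq N_n$ for the final claim. The extra care you take with the auxiliary operations and the reduced eigenproblem in the multi-eigenvalue case is a welcome refinement but does not change the argument.
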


\begin{proof}
Let $W_k$ denote the work of the iteration step defined in Algorithm
\ref{Correction_Step} $($Algorithm \ref{Correction_Step_Multiple} in each computing node$)$
in the $k$-th finite element space $V_{h_k}$ for $k=2, \cdots, n$.
From the iteration definition in Algorithm \ref{Correction_Step}
$($Algorithm \ref{Correction_Step_Multiple}$)$, we have
\begin{eqnarray}\label{work_k}
W_k&=&\mathcal{O}(N_k),\ \ \ \ \mbox{ for}\ k=2, \cdots, n.
\end{eqnarray}
Iterating \eqref{work_k} and using the fact \eqref{relation_dimension},
the following estimates hold
\begin{eqnarray}\label{Work_Estimate}
\mbox{Total work} &=&\sum_{k=1}^nW_k= \mathcal{O}\Big(M_{h_1}+\sum_{k=2}^nN_k\Big)
=\mathcal{O}\Big(M_{h_1}+\sum_{k=2}^nN_k\Big)\nonumber\\
&=&\mathcal{O}\Big(M_{h_1}+\sum_{k=2}^n\Big(\frac{1}{\beta}\Big)^{d(n-k)}N_n\Big)
=\mathcal{O}(N_n+M_{h_1}).
\end{eqnarray}
This is the desired estimate $\mathcal{O}(N_n+M_{h_1})$ for the computational work and the
one $\mathcal{O}(N_n)$ can be derived with the condition $M_{h_1}\leq N_n$.
\end{proof}
\section{Numerical results}
In this section, two numerical examples are presented to illustrate the
efficiency of the multilevel iteration scheme proposed in this
paper.

\subsection{Model eigenvalue problem}
Here we give the numerical results of the  multilevel iteration
scheme for the Laplace eigenvalue problem
on the two dimensional domain $\Omega=(0,1 )\times (0, 1)$.  The sequence of
finite element spaces is constructed by
using linear element on the series of meshes which are produced by the
regular refinement with $\beta =2$ (producing $\beta^2$ subelements).
In this example, we use two meshes which are generated by Delaunay method as
the initial mesh $\mathcal{T}_{h_1}$ ($H=h_{1}$) to produce two sequences of finite
 element spaces for investigating the convergence behaviors.
Figure \ref{Initial_Mesh} shows the corresponding
initial meshes: one is coarse and the other is fine.

Algorithm \ref{Multi_Correction} is applied to solve the eigenvalue problem.
For comparison, we also solve the eigenvalue problem by the direct method.
\begin{figure}[htb]
\centering
\includegraphics[width=6cm,height=5cm]{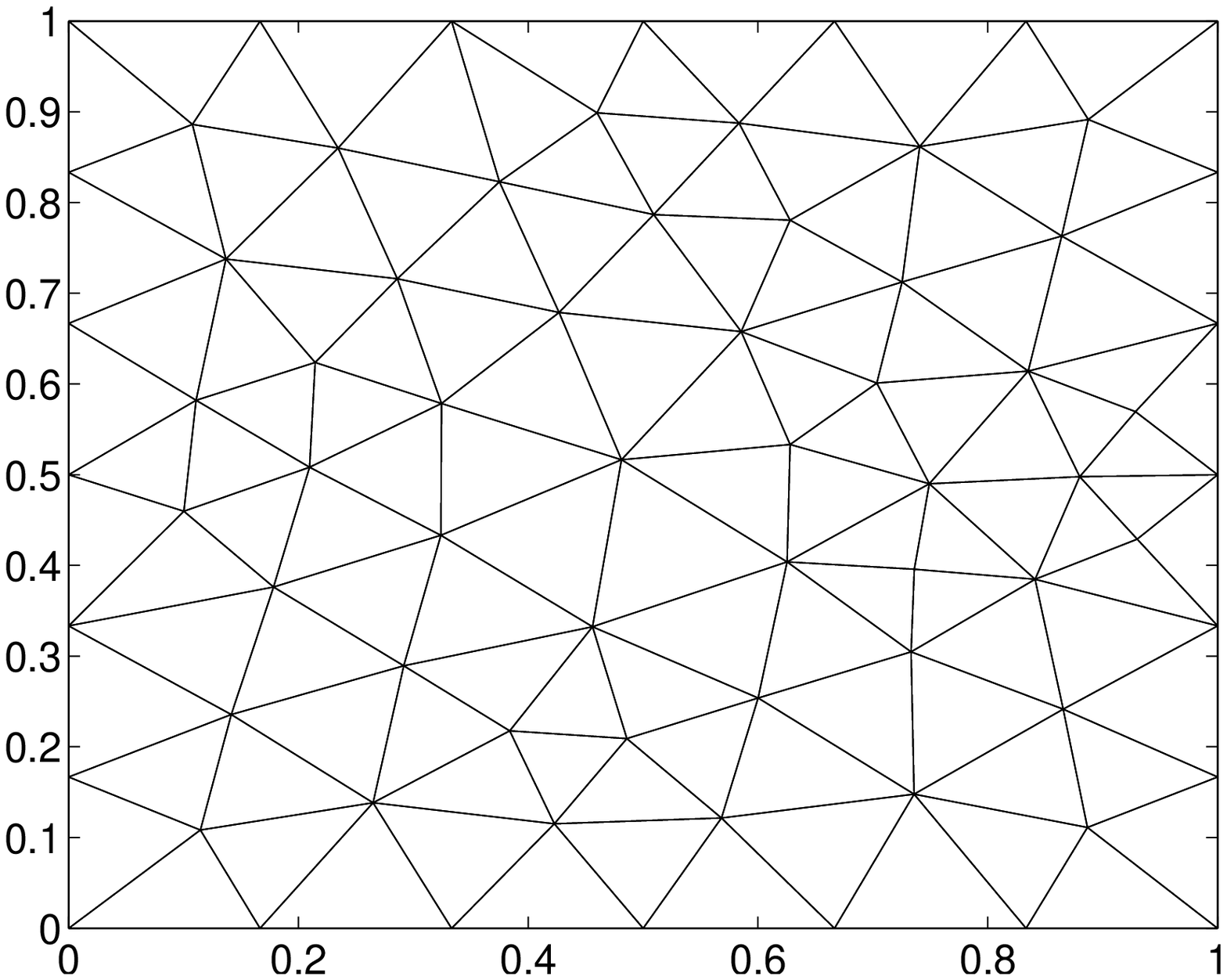}
\includegraphics[width=6cm,height=5cm]{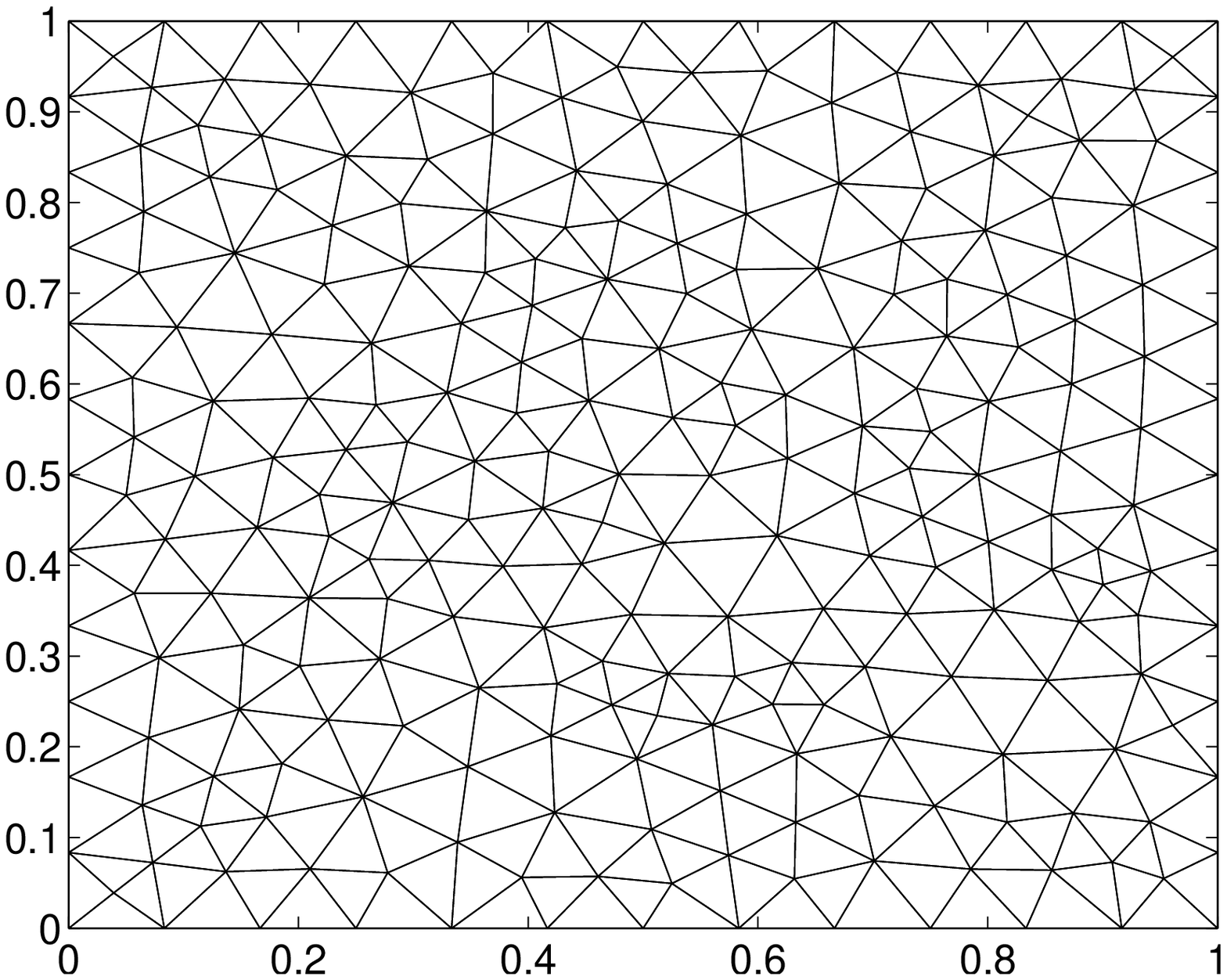}
\caption{\small\texttt The initial coarse $H=1/6$ and fine $H=1/12$ meshes for Example 1}
\label{Initial_Mesh}
\end{figure}

Figure \ref{numerical_multi_grid_2D}
gives the corresponding numerical results for the first eigenvalue
$\lambda_1=2\pi^2$ and the corresponding eigenfunction on the two initial meshes
 illustrated in Figure \ref{Initial_Mesh}. From Figure \ref{numerical_multi_grid_2D},
we find the  multilevel iteration scheme can obtain
the optimal error estimates as same as the direct eigenvalue solving method
for the eigenvalue and the corresponding eigenfunction approximations.

\begin{figure}[htb]
\centering
\includegraphics[width=6.5cm,height=6cm]{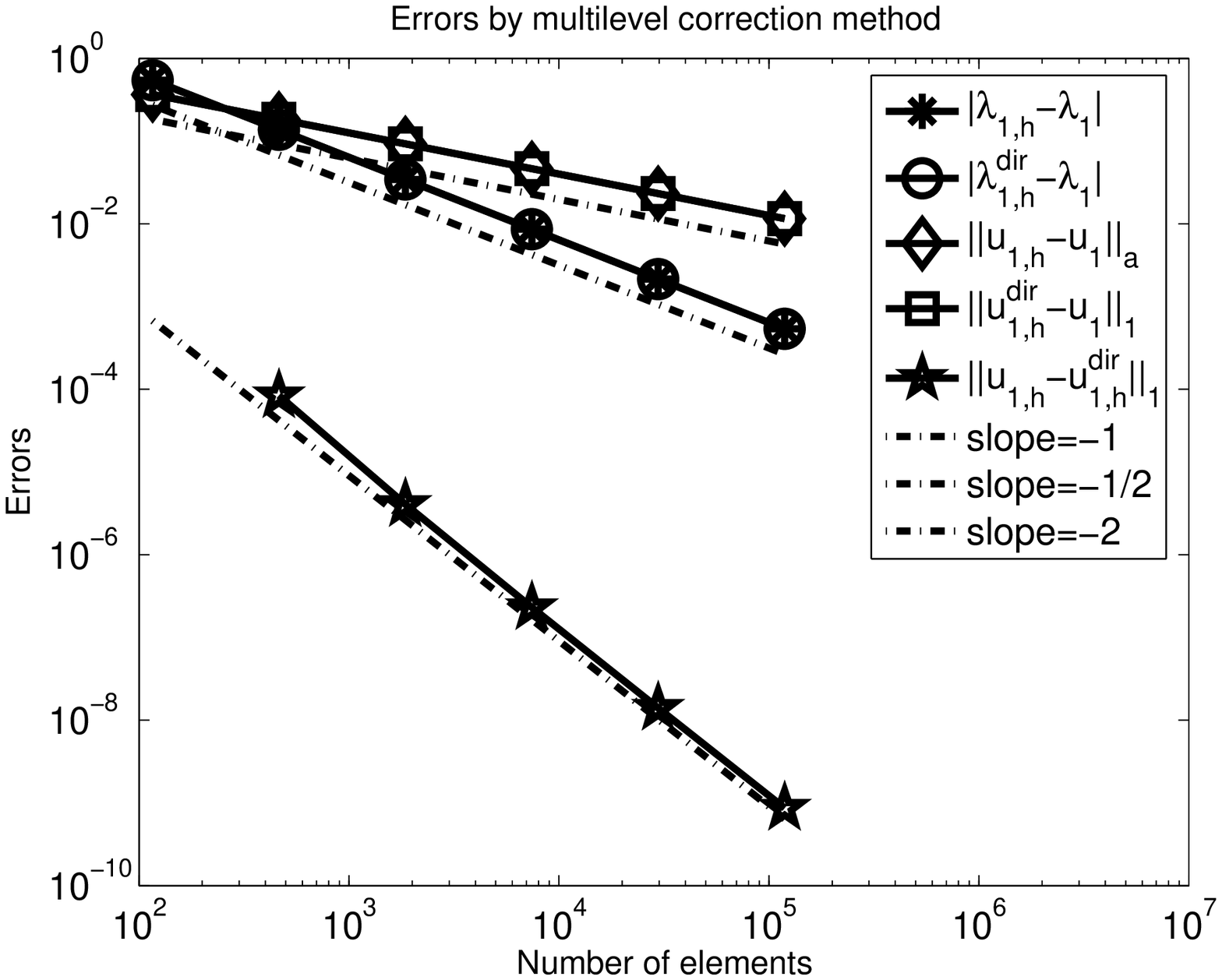}
\includegraphics[width=6.5cm,height=6cm]{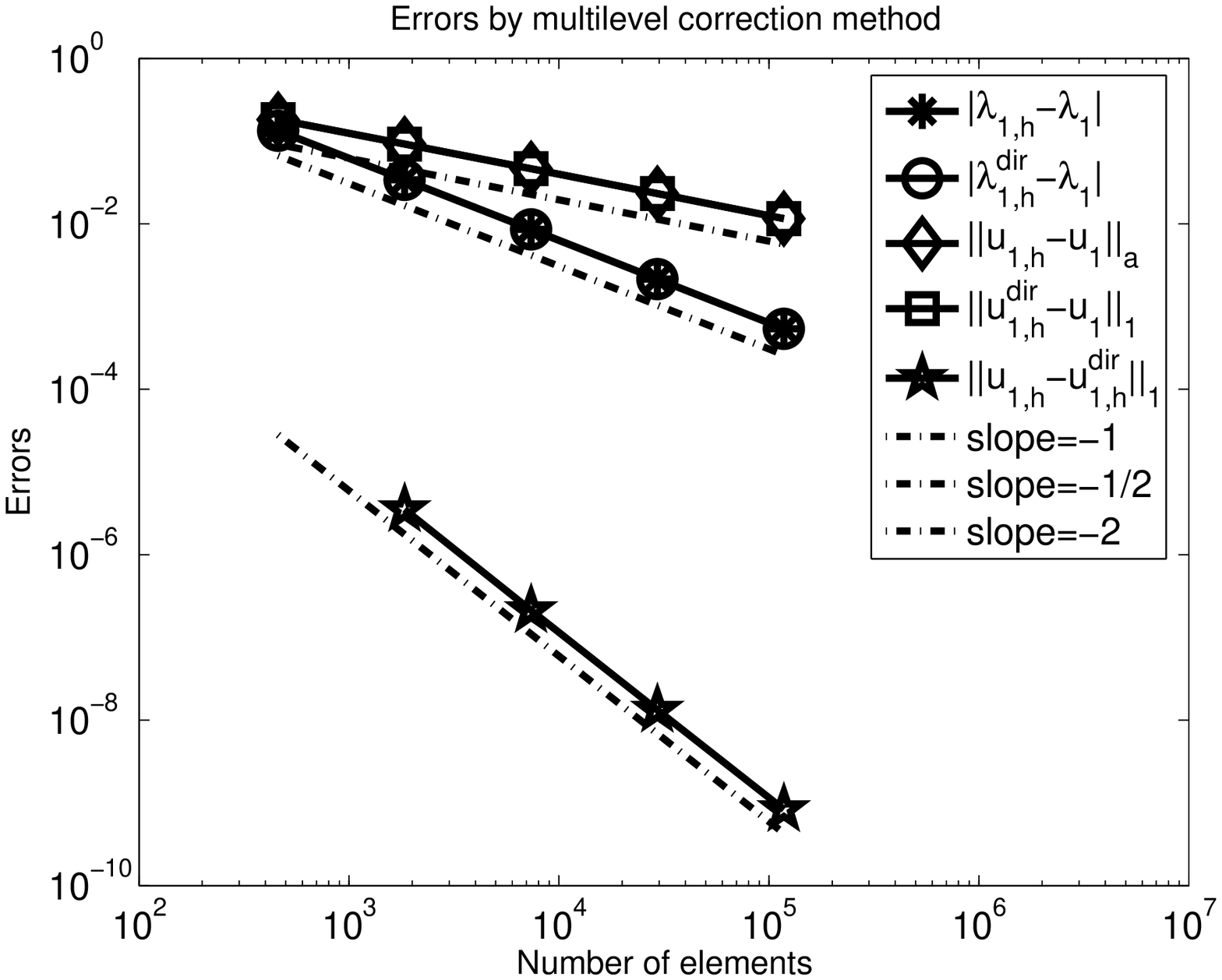}
\caption{\small\texttt The errors of the  multilevel iteration
algorithm for the first eigenvalue $2\pi^2$ and the corresponding eigenfunction,
where $u_h^{\mbox{dir}}$ and $\lambda_h^{\mbox{dir}}$ denote the eigenfunction
 and eigenvalue approximation by direct eigenvalue solving
 (The left subfigure is for the coarse initial mesh in the left of Figure \ref{Initial_Mesh}
and the right one for the fine initial mesh in the right of Figure \ref{Initial_Mesh})}
\label{numerical_multi_grid_2D}
\end{figure}

We also check the convergence behavior for multi eigenvalue approximations with Algorithm
\ref{Multi_Correction}. Here the first six eigenvalues
$\lambda=2\pi^2, 5\pi^2, 5\pi^2, 8\pi^2,10\pi^2,
10\pi^2$ are investigated. We adopt the meshes in Figure \ref{Initial_Mesh} as
the initial ones and the corresponding numerical results are shown
in Figure \ref{numerical_multi_grid_2D_6}.
Figure \ref{numerical_multi_grid_2D_6} also exhibits the
optimal convergence rate of the  multilevel iteration scheme.

\begin{figure}[htb]
\centering
\includegraphics[width=6.5cm,height=6cm]{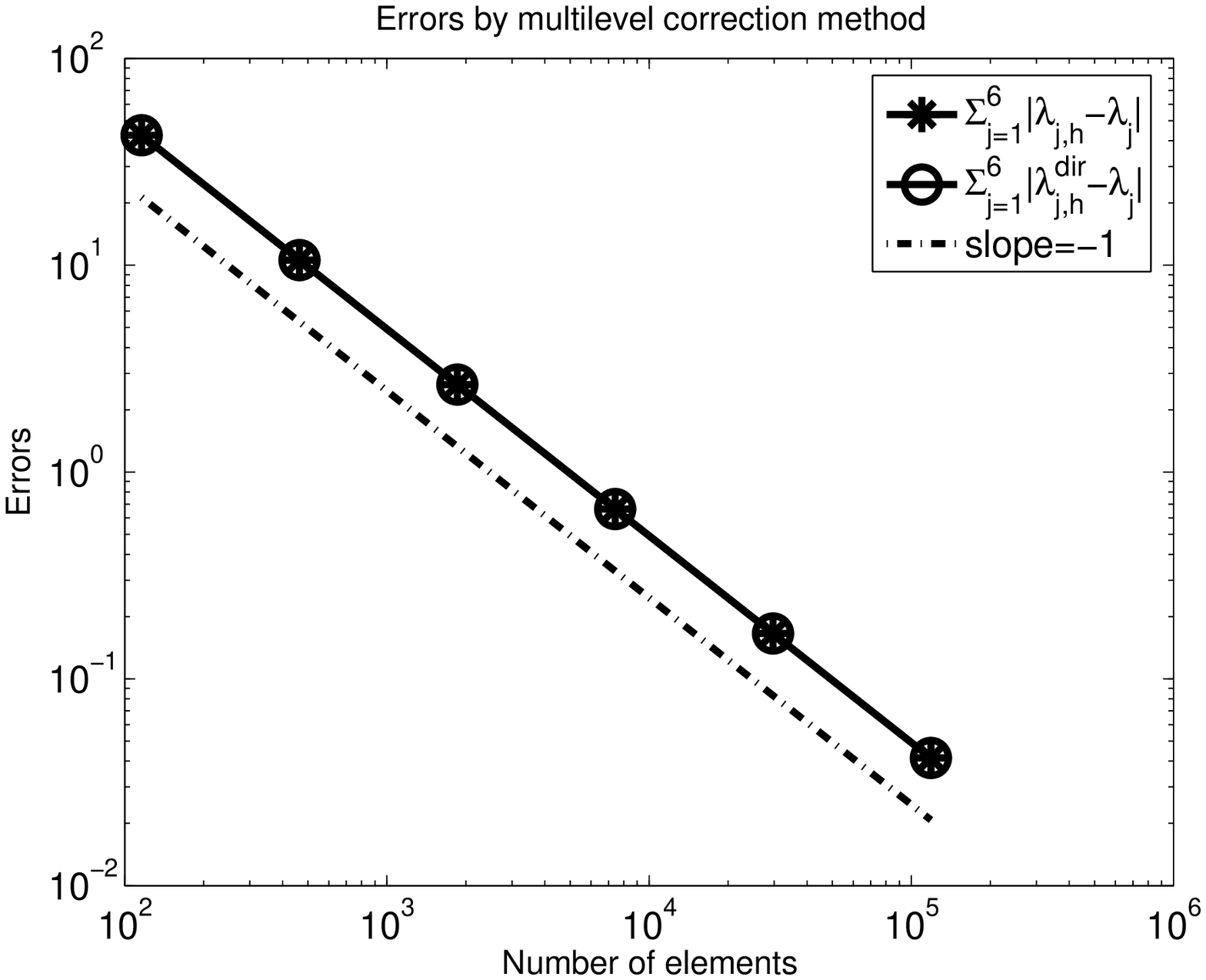}
\includegraphics[width=6.5cm,height=6cm]{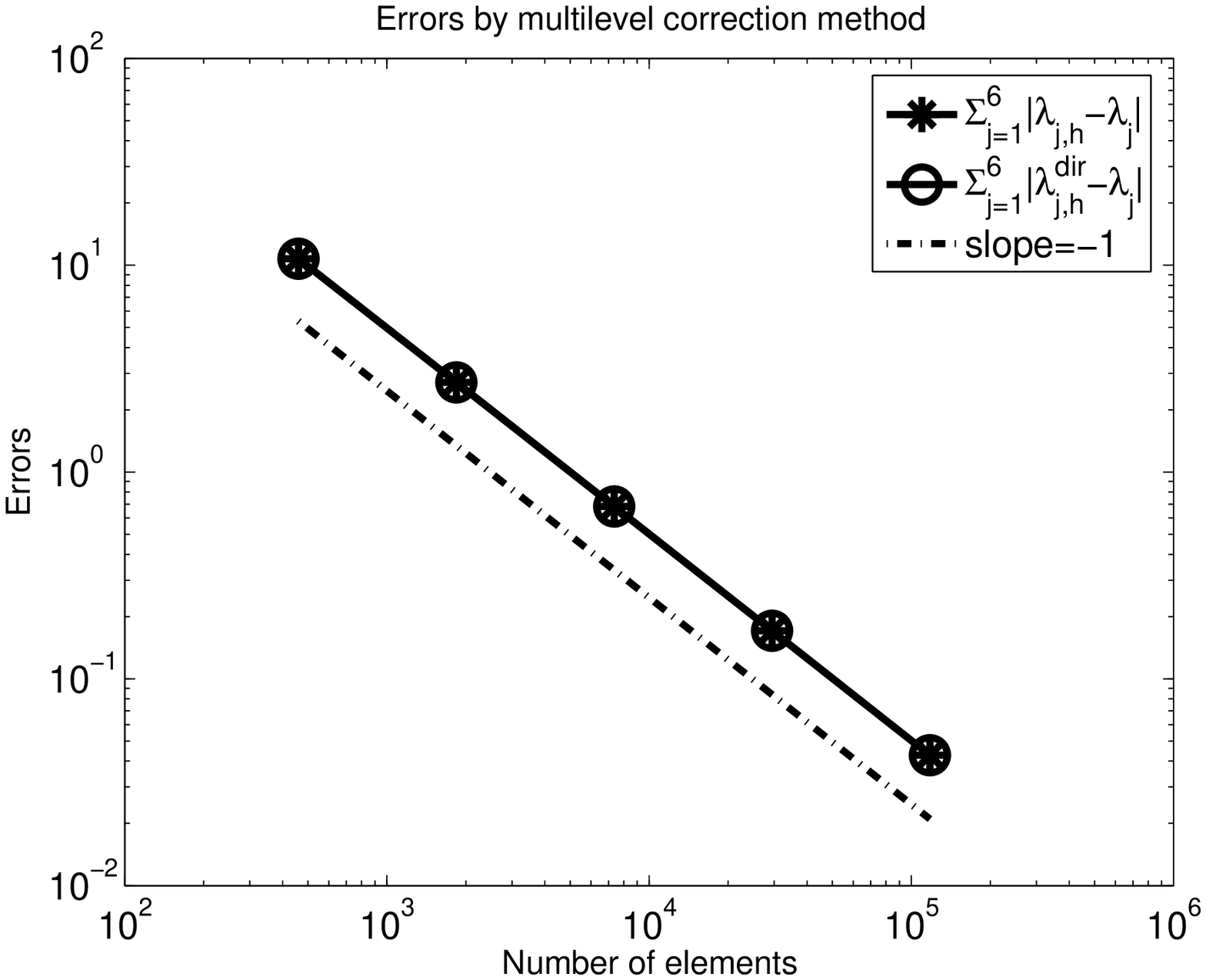}
\caption{\small\texttt The errors of the  multilevel iteration
algorithm for the first six eigenvalues on the unit square
(The left subfigure is for the coarse initial mesh in the left of Figure \ref{Initial_Mesh}
and the right one for the fine initial mesh in the right of Figure
\ref{Initial_Mesh})}\label{numerical_multi_grid_2D_6}
\end{figure}
\subsection{More general eigenvalue problem}
Here we give the numerical results of the  multilevel iteration
scheme for solving a more general eigenvalue problem on the unit square
domain $\Omega=(0, 1)\times (0, 1)$.

Find $(\lambda,u)$ such that
\begin{equation}\label{Example_2}
\left\{
\begin{array}{rcl}
-\nabla\cdot\mathcal{A}\nabla u+\phi u&=&\lambda\rho u,\quad\mbox{ in}\ \Omega,\\
u&=&0,\quad\ \ \ \mbox{ on}\ \partial\Omega,\\
\int_{\Omega}\rho u^2d\Omega&=&1,
\end{array}
\right.
\end{equation}
where
\begin{equation*}
\mathcal{A}=\left (
\begin{array}{cc}
$$1+(x_1-\frac{1}{2})^2$$&$$(x_1-\frac{1}{2})(x_2-\frac{1}{2})$$\\
$$(x_1-\frac{1}{2})(x_2-\frac{1}{2})$$&$$1+(x_2-\frac{1}{2})^2$$
\end{array}
\right),
\end{equation*}
$\phi=e^{(x_1-\frac{1}{2})(x_2-\frac{1}{2})}$ and
$\rho=1+(x_1-\frac{1}{2})(x_2-\frac{1}{2})$.

We first solve the eigenvalue problem
\eqref{Example_2} in the linear finite element space on the coarse mesh
$\mathcal{T}_{h_1}$. Then refine the mesh by the regular way to produce
a series of meshes $\mathcal{T}_{h_k}\ (k=2,\cdots,n)$ with $\beta =2$
(connecting the midpoints of each edge) and solve the augmented mixed
problem \eqref{JD_problem} in the finer linear finite element space
$V_{h_k}$ defined on $\mathcal{T}_{h_k}$.

In this example, we also use two coarse meshes which are shown in Figure \ref{Initial_Mesh}
as the initial meshes to investigate the convergence behaviors.
Since the exact solution is unknown, we choose an adequately accurate eigenvalue
approximations with the extrapolation method (see, e.g., \cite{Lin2006Finite}) as the exact eigenvalue.
Figure \ref{numerical_multi_grid_Exam_2} gives the corresponding
numerical results for the first six eigenvalue approximations and
their corresponding eigenfunction approximations.
Here we also compare the numerical results with the direct algorithm.
Figure \ref{numerical_multi_grid_Exam_2} also exhibits the optimal convergence rate of
Algorithm \ref{Multi_Correction}.
\begin{figure}[htb]
\centering
\includegraphics[width=6.5cm,height=6cm]{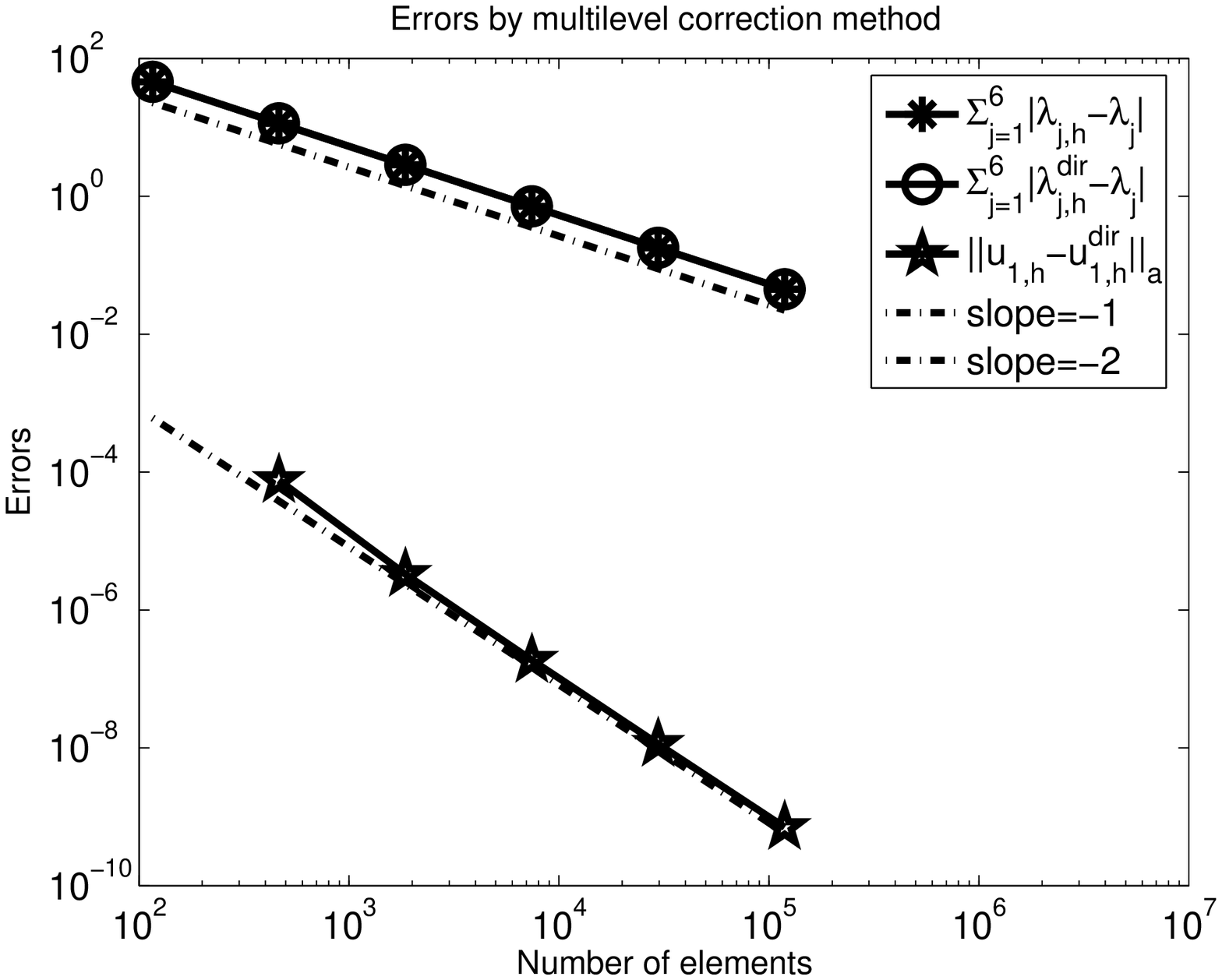}
\includegraphics[width=6.5cm,height=6cm]{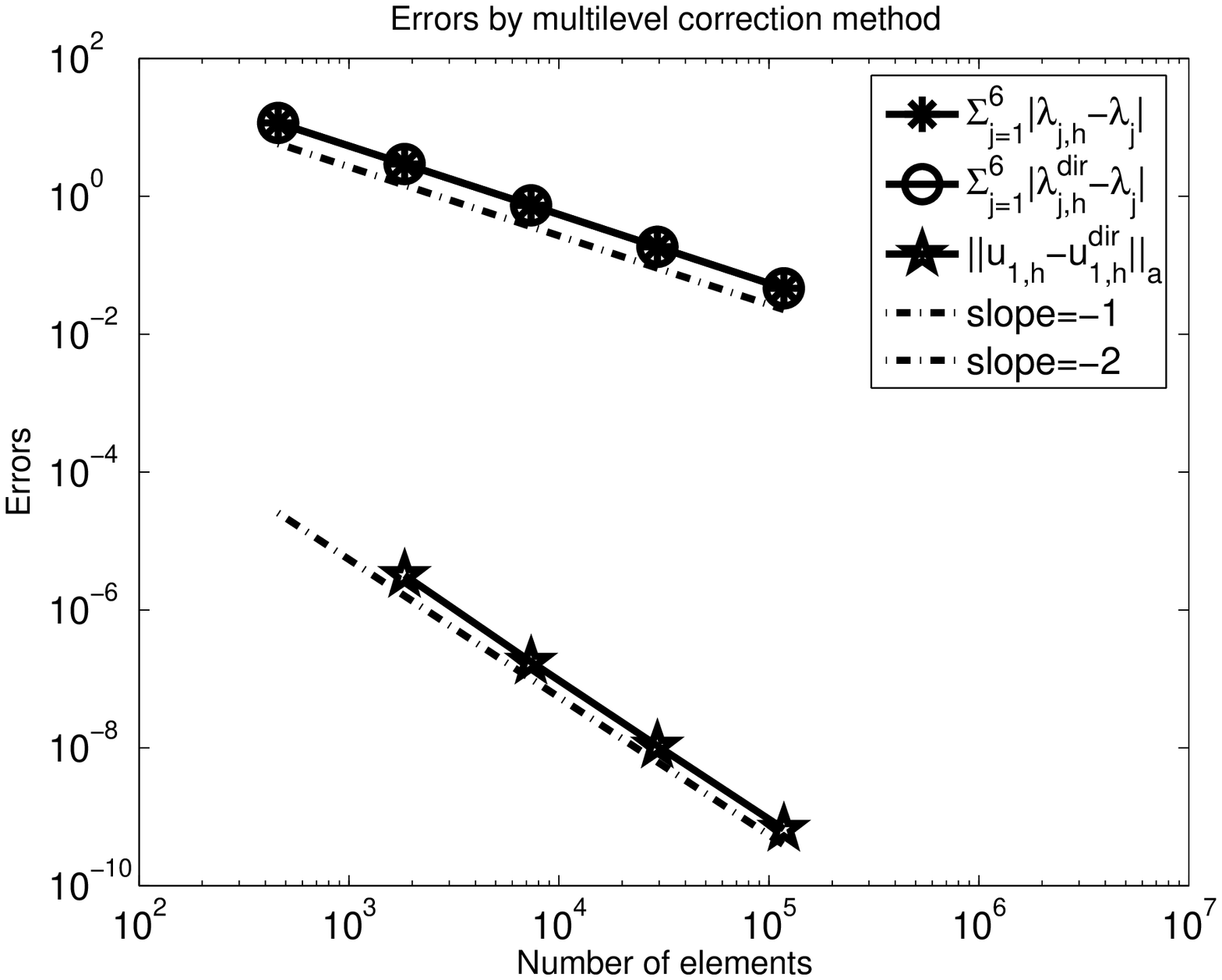}
\caption{\small\texttt The errors of the multilevel iteration
algorithm for the first six eigenvalues and the corresponding first eigenfunction,
where $u_h^{\mbox{dir}}$ and $\lambda_h^{\mbox{dir}}$ denote the eigenfunction
and eigenvalue approximation by direct eigenvalue solving (The left subfigure
is for the coarse initial mesh in the left of Figure \ref{Initial_Mesh}
and the right one for the fine initial mesh in the right of
 Figure \ref{Initial_Mesh})}\label{numerical_multi_grid_Exam_2}
\end{figure}

\section{Concluding remarks}
In this paper, we propose a type of multilevel method for eigenvalue problems based on the Newton iteration
scheme. In this type of iteration method, solving eigenvalue problem on the finest
finite element space is decomposed into solving a small scale eigenvalue problem in a coarse initial space and
solving a sequence of augmented linear problems, derived by Newton iteration step in the corresponding sequence
of finite element spaces. The proposed scheme improves the overall efficiency of eigenvalue problem solving
by the finite element method.

The quadratic convergence property of Newton's method improves the accuracy of the numerical solution.
On the other hand, the multilevel technique overcomes the sensitivity of initial guess of Newton scheme.

\bibliography{fullbib}

\end{document}